\newtheorem{thm}{Theorem}
\newtheorem{lem}[thm]{Lemma}
\newtheorem{prop}[thm]{Proposition}
\newtheorem{cor}[thm]{Corollary}
\newtheorem*{thma}{Theorem A}
\newtheorem*{thmb}{Theorem B}
\newtheorem*{thmc}{Theorem C}
\newcommand{\C}{{\mathbb C}}
\newcommand{\D}{{\mathbb D}}
\newcommand{\B}{{\mathbb B}}
\newcommand{\R}{{\mathbb R}}
\newcommand{\T}{{\mathbb T}}
\newcommand{\bn}{{\mathbb B}_n}
\newcommand{\bnr}{r\bn}
\newcommand{\cn}{\C^n}
\newcommand{\sn}{{\mathbb S}_n}
\newcommand{\ins}{\int_{\sn}}
\newcommand{\mb}{{\mathcal M}_\beta}
\newcommand{\ind}{\int_\D}
\newcommand{\Ind}{{\rm Ind}\,}
\newcommand{\inb}{\int_{\bn}}
\begin{document}

\title[Spectral Theory for Multiplication Operators]
{Spectral Theory of Multiplication Operators\\ on Hardy-Sobolev Spaces}

\author{Guangfu Cao}
\address{Cao: Department of Mathematics, South China Agricultural University, 
Guangzhou, Guangdong 510640, China}
\email{guangfucao@163.com}

\author{Li He}
\address{He: School of Mathematics and Information Science,
Guangzhou University, Guangzhou 510006, China}
\email{helichangsha1986@163.com}

\author{Kehe Zhu}
\address{Zhu: Department of Mathematics and Statistics, SUNY, Albany, NY 12222, USA}
\email{kzhu@albany.edu}

\begin{abstract}
For a pointwise multiplier $\varphi$ of the Hardy-Sobolev space $H^2_\beta$ on
the open unit ball $\bn$ in $\cn$, we study spectral properties of the multiplication 
operator $M_\varphi: H^2_\beta\to H^2_\beta$. In particular, we compute the 
spectrum and essential spectrum of $M_\varphi$ and develop the Fredholm theory 
for these operators.
\end{abstract}

\thanks{Zhu's research was supported by NNSF of China (Grant No. 11571217), the Project 
of International Science and Technology Cooperation Innovation Platform in Universities in 
Guangdong Province (Grant No. 2014KGJHZ007), and Shantou University (Grant No. NTF17009). 
Cao and He's research was supported by NNSF of China (Grant No. 11501136, 11671152).}

\keywords{Hardy-Sobolev space, Drury-Arveson space, Bergman space, Hardy space,
Dirichlet space, multipliers, spectrum, essential spectrum, Fredholm operator, Fredholm
index.}

\subjclass[2010]{Primary 32A36, 47B35, secondary 32A35, 47A10}

\maketitle

\section{Introduction}

Let $\bn$ be the open unit ball in $\cn$ and $H(\bn)$ be the space of all holomorphic
functions on $\bn$. For $f\in H(\bn)$ we use
$$Rf(z)=z_1\frac{\partial f}{\partial z_1}(z)+\cdots+z_n\frac{\partial f}{\partial z_n}(z)$$
to denote the radial derivative of $f$ at $z$. If
$$f(z)=\sum_{k=0}^\infty f_k(z)$$
is the homogeneous expansion of $f$, then it is easy to see that
$$Rf(z)=\sum_{k=0}^\infty kf_k(z)=\sum_{k=1}^\infty kf_k(z).$$
More generally, for any real $\beta$ and any $f\in H(\bn)$ with the homogeneous expansion 
above, we define
$$R^\beta f(z)=\sum_{k=1}^\infty k^\beta f_k(z)$$
and call it the radial derivative of $f$ of order $\beta$. 

It is clear that these fractional radial differential operators satisfy $R^\alpha 
R^\beta=R^{\alpha+\beta}$. When $\beta<0$, the effect of $R^\beta$ on $f$ is 
actually ``integration'' instead of ``diffferentiation''. For example, radial differentiation 
of order $-3$ is actually radial integration of order $3$.

For $\beta\in\R$ the Hardy-Sobolev space $H^2_\beta$ consists of all holomorphic 
functions $f$ on $\bn$ such that $R^\beta f$ belongs to the classical 
Hardy space $H^2$. It is clear that $H^2_\beta$ is a Hilbert space with the inner 
product
$$\langle f,g\rangle_\beta=f(0)\overline{g(0)}+\langle R^\beta f, R^\beta g\rangle_{H^2}.$$
The induced norm in $H^2_\beta$ is then given by
$$\|f\|^2_\beta=|f(0)|^2+\|R^\beta f\|^2_{H^2}.$$

The multiplier algebra of $H^2_\beta$, denoted by $\mb$, consists of all functions 
$\varphi\in H(\bn)$ such that $\varphi f\in H^2_\beta$ for every $f\in H^2_\beta$.
A standard application of the closed-graph theorem shows that every $\varphi\in\mb$
induces a bounded linear operator $M_\varphi: H^2_\beta\to H^2_\beta$.  The
purpose of this paper is to study the spectral properties of these multiplication operators.
Our main results are the following.

\begin{thma}
Suppose $\beta\in\R$ and $\varphi\in\mb$. Then the spectrum of 
$M_\varphi: H^2_\beta\to H^2_\beta$ is the closure of $\varphi(\bn)$ in the 
complex plane.
\end{thma}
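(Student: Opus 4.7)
The plan is to establish the two inclusions $\overline{\varphi(\bn)}\subseteq\sigma(M_\varphi)$ and $\sigma(M_\varphi)\subseteq\overline{\varphi(\bn)}$ separately; the second is the substantial one.

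For the first inclusion, I would exploit the reproducing-kernel structure of $H^2_\beta$. Since $R^\beta$ is an isomorphism modulo constants onto $H^2$, and $H^2$ has bounded point evaluations in $\bn$, the space $H^2_\beta$ is itself a reproducing kernel Hilbert space: each $z\in\bn$ carries a kernel $K_z\in H^2_\beta$. A one-line computation from $\langle M_\varphi f,K_z\rangle_\beta=\varphi(z)f(z)$ then gives $M_\varphi^*K_z=\overline{\varphi(z)}\,K_z$, so every $\varphi(z)$ is an eigenvalue of $M_\varphi^*$ and hence a point of $\sigma(M_\varphi)$. Taking closure closes this direction.

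For the second inclusion, fix $\lambda\notin\overline{\varphi(\bn)}$ and put $\psi=\varphi-\lambda\in\mb$, so that $|\psi|\geq\delta>0$ on $\bn$ and $1/\psi\in H^\infty(\bn)$. Invertibility of $M_\varphi-\lambda=M_\psi$ on $H^2_\beta$ is equivalent to $1/\psi\in\mb$: if $1/\psi$ is a multiplier then $M_{1/\psi}$ is a bounded two-sided inverse for $M_\psi$, while conversely if $M_\psi$ is invertible then applying $M_\psi^{-1}$ to test functions forces $M_\psi^{-1}$ to be multiplication by $1/\psi$. Thus the theorem reduces to showing that $\mb$ is inverse-closed inside $H^\infty(\bn)$.

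The main obstacle, and the technical heart of the proof, is this inverse-closedness for all real $\beta$. When $\beta=0$ one has $\mb=H^\infty$ and the claim is classical; when $\beta$ is a positive integer one can iterate the ordinary Leibniz rule on $R^\beta(\psi g)=R^\beta f$ to bound $\|g\|_\beta$ in terms of $\|f\|_\beta$, $\delta$, and the multiplier norm of $\psi$; for general $\beta\in\R$ the route I would try is to exploit the series formula $R^\beta f=\sum_{k\geq 1}k^\beta f_k$ to produce a commutator identity between $R^\beta$ and $M_\psi$, solve $\psi g=f$ formally, and estimate $R^\beta g$ using the lower bound $|\psi|\geq\delta$ together with the a priori inequality $\|\eta\|_\infty\leq\|M_\eta\|$ already obtained from the kernel computation. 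Combined, these should establish boundedness of $M_{1/\psi}$ on $H^2_\beta$, which is exactly what places $\lambda$ in the resolvent set.
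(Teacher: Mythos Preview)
Your overall architecture matches the paper exactly: the reproducing-kernel eigenvector computation for $\overline{\varphi(\bn)}\subset\sigma(M_\varphi)$ is the paper's argument verbatim, and you correctly reduce the reverse inclusion to showing that $\mb$ is inverse-closed in $H^\infty(\bn)$, which is the paper's Proposition~7.

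The gap is in your proposed proof of inverse-closedness. The Leibniz-and-iterate idea for integer $\beta$ runs into trouble: expanding $R^N(\psi g)=R^N f$ gives $\psi\,R^N g=R^N f-\sum_{k\ge1}\binom{N}{k}R^k\psi\,R^{N-k}g$, and controlling the cross terms $R^k\psi\cdot R^{N-k}g$ requires bounds on intermediate derivatives of $g$ that you do not yet have; neither $\psi$ nor $g$ need be bounded when $\beta>0$, so the terms do not split as $H^\infty\cdot H^2$. The fractional case is only sketched (``commutator identity,'' ``solve formally''), and there is no obvious Leibniz-type rule for $R^\beta$ with non-integer $\beta$ that would make such an argument go through.

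The paper avoids both difficulties with two devices you are missing. First, it replaces the fractional derivative $R^\beta$ by an integer derivative $R^N$, $N>\beta$, via the equivalence $f\in H^2_\beta\iff R^N f\in A^2_{2(N-\beta)-1}$ (Proposition~2); this converts the problem into one about an ordinary weighted Bergman space whose multiplier algebra is simply $H^\infty$. Second---and this is the real point---it proves the algebraic identity
\[
R^N\!\left(\frac{f}{\varphi}\right)=\frac{(-1)^N}{\varphi^{N+1}}\sum_{k=0}^N(-1)^k\binom{N+1}{k}\varphi^k R^N(\varphi^{N-k}f),
\]
which expresses $R^N(f/\varphi)$ entirely through $R^N$ applied to products $\varphi^{N-k}f$ (each in $H^2_\beta$ since $\varphi\in\mb$), multiplied only by the bounded functions $\varphi^k$ and $\varphi^{-(N+1)}$. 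No lower-order derivatives of $f/\varphi$ ever appear, so no induction or intermediate estimate is needed. Establishing this identity (the paper's Theorem~5 and Corollary~6) is the technical heart you have not supplied.
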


Note that the theorems above and below may look like simple extensions of
known results on the Hardy and Bergman spaces to the setting of Hardy-Sobolev
spaces. This is far from the truth. In fact, these results are surprising when we realize
that, for general $\beta$, the norm of $M_\varphi: H^2_\beta\to H^2_\beta$ is usually 
much bigger than $\|\varphi\|_\infty$!

\begin{thmb}
Suppose $\beta\in\R$ and $\varphi\in\mb$. Then the essential spectrum 
of $M_\varphi:H^2_\beta\to H^2_\beta$ is given by
$$\sigma_e(M_\varphi)=\bigcap_{r\in(0,1)}\overline{\varphi(\bn-r\bn)},$$
where $r\bn=\{z\in\cn: |z|<r\}$.
\end{thmb}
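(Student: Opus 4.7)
I would prove the two inclusions separately.

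For the inclusion $\supseteq$, let $\lambda \in \bigcap_{r\in(0,1)} \overline{\varphi(\bn-r\bn)}$. A diagonal argument produces a sequence $\{z_k\} \subset \bn$ with $|z_k|\to 1$ and $\varphi(z_k)\to \lambda$. Since $H^2_\beta$ is a reproducing kernel Hilbert space with kernel $K_z$, and $M_\varphi^* K_z = \overline{\varphi(z)} K_z$ on any such space, the normalized kernels $k_z = K_z/\|K_z\|_\beta$ satisfy
$$\bigl\|(M_\varphi - \lambda I)^* k_{z_k}\bigr\|_\beta = |\varphi(z_k) - \lambda| \longrightarrow 0.$$
Because polynomials are dense in $H^2_\beta$ and $\|K_z\|_\beta \to \infty$ as $|z|\to 1$, the sequence $k_{z_k}$ tends weakly to $0$; hence $(M_\varphi-\lambda I)^*$, and with it $M_\varphi-\lambda I$, fails to be Fredholm, placing $\lambda$ in $\sigma_e(M_\varphi)$.

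For the inclusion $\subseteq$, suppose there exist $r_0 \in (0,1)$ and $\delta>0$ with $|\varphi-\lambda| \geq \delta$ on $\bn \setminus r_0\bn$. After replacing $\varphi$ by $\varphi-\lambda$ I take $\lambda=0$. When $n \geq 2$, Hartogs' extension theorem promotes the bounded holomorphic function $1/\varphi$ on the spherical shell $\bn \setminus \overline{r_0\bn}$ to some $\tilde\psi \in H(\bn)$; analytic continuation yields $\tilde\psi\varphi \equiv 1$, so $\varphi$ is zero-free on $\bn$. Combined with the lower bound near the sphere and a compactness argument on $\overline{r_0\bn}$, this forces $0 \notin \overline{\varphi(\bn)}$, and Theorem~A gives $0 \notin \sigma(M_\varphi) \supseteq \sigma_e(M_\varphi)$. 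When $n = 1$ Hartogs is unavailable; however, the zero set of $\varphi$ in $\D$ lies inside the compact $\overline{r_0\D}$ and is therefore finite, say $z_1,\dots,z_m$ counted with multiplicity. Writing $\varphi = p\,g$ with $p(z) = \prod_{j=1}^m(z-z_j)$, the factor $g$ is holomorphic, zero-free, and bounded above and below on $\D$. Division by polynomials places $g$ in $\mb$; the multiplier-inversion lemma used in Theorem~A then makes $M_g$ invertible; and $M_p$ is Fredholm by the classical one-variable theory. Consequently $M_\varphi = M_p M_g$ is Fredholm.

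The main obstacle lies in the $n=1$ branch of the second inclusion, which relies on two structural facts about $\mb$: that a multiplier of $H^2_\beta$ can be divided by a polynomial whose zeros match its own, and that a multiplier bounded below on $\bn$ has its reciprocal in $\mb$. Both are natural companions to the proof of Theorem~A, and once available the Fredholm conclusion is routine. It is worth noting that the Hartogs step shows $\sigma_e(M_\varphi)=\sigma(M_\varphi)$ whenever $n\geq 2$, so the interior and boundary cluster sets of $\varphi$ coincide in that setting.
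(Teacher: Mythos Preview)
Your argument for the inclusion $\supseteq$ has a real gap: the claim that $\|K_z\|_\beta\to\infty$ as $|z|\to 1$ is false once $\beta>n/2$. In that range Proposition~\ref{4} shows every $f\in H^2_\beta$ extends continuously to $\overline{\bn}$, so point evaluations remain uniformly bounded and $\sup_{z\in\bn}\|K_z\|_\beta<\infty$; the normalized kernels $k_z$ then cannot tend weakly to $0$. The paper makes exactly this point for $n=1$, noting that $k_a\to 0$ weakly if and only if $\beta\le 1/2$, and this is why it splits the proof into cases. For $n>1$ the paper bypasses the kernel argument entirely: if $\lambda\in\varphi(\bn)$ then the analytic variety $\{\varphi=\lambda\}$ is automatically infinite, so $\ker M_{\lambda-\varphi}^*$ contains infinitely many linearly independent reproducing kernels and $M_{\lambda-\varphi}$ is not Fredholm; combined with your Hartogs step this gives $\sigma_e(M_\varphi)=\overline{\varphi(\bn)}$. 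For $n=1$ and $\beta>1/2$ there is no such shortcut, and the paper builds an explicit weak null sequence from the peak functions $f_k(z)=\bigl((1+\overline\zeta z)/2\bigr)^k$ at the boundary point $\zeta$ where $\varphi=\lambda$, then carries out a fairly delicate estimate to show $\|M_{\lambda-\varphi}g_k\|_\beta\to 0$ with $g_k=f_k/\|f_k\|_\beta$. This is the substantive missing ingredient in your proposal.

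Your treatment of the inclusion $\subseteq$ is essentially the paper's: Hartogs for $n\ge 2$, and factorization $\varphi=pg$ for $n=1$. The two ``structural facts'' you flag are indeed what need work; the paper handles both at once by observing that the integral estimates behind Proposition~\ref{7} only see the behavior of the functions near $\sn$, so a nonvanishing $H^\infty$ factor $g$ with $\varphi/g$ a polynomial automatically lies in $\mb$, and then Proposition~\ref{7} gives $1/g\in\mb$.
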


\begin{thmc}
Suppose $\beta\in\R$ and $\varphi\in\mb$. Then $M_\varphi: H^2_\beta\to H^2_\beta$ 
is Fredholm if and only if there exist $r\in(0,1)$ and $\delta>0$ such that 
$|\varphi(z)|\ge\delta$ for all $z\in\bn-\bnr$. Moreover, when $M_\varphi$ is Fredholm, 
its Fredholm index is always $0$ for $n>1$ and is equal to minus the winding number of 
the mapping $e^{it}\mapsto\varphi(re^{it})$, where $r\in(0,1)$ is sufficiently close to $1$. 
\end{thmc}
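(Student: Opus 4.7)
The Fredholm characterization is immediate from Theorem B. Indeed $M_\varphi$ is Fredholm precisely when $0\notin\sigma_e(M_\varphi)=\bigcap_{r\in(0,1)}\overline{\varphi(\bn-\bnr)}$; and for a fixed $r$, the condition $0\notin\overline{\varphi(\bn-\bnr)}$ is, by the definition of closure, exactly the statement that $|\varphi|\geq\delta$ on $\bn-\bnr$ for some $\delta>0$. So the first assertion is a direct corollary of Theorem B.

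For the index, I begin by locating the zeros of $\varphi$ in $\bn$ through the reproducing-kernel identity $M_\varphi^* k_w=\overline{\varphi(w)}\,k_w$: each zero of $\varphi$ produces a vector in $\ker M_\varphi^*$, and a zero of multiplicity $m$ produces $m$ linearly independent ones through derivative kernels. Since Fredholmness forces $\dim\ker M_\varphi^*<\infty$ and distinct point kernels are linearly independent, $\varphi$ has only finitely many zeros in $\bn$ counted with multiplicity. For $n>1$, any nonempty zero set of a nonzero holomorphic function on $\bn$ is a pure $(n-1)$-dimensional analytic hypersurface, hence uncountable; so Fredholmness forces $\varphi$ to be zero-free whenever $n>1$, and then $|\varphi|\geq\delta$ on $\bn-\bnr$ together with nonvanishing continuity on the relatively compact $\overline{\bnr}\subset\bn$ yields a uniform positive lower bound for $|\varphi|$ on all of $\bn$.

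When $n>1$, I would then show $1/\varphi\in\mb$; this makes $M_{1/\varphi}$ a two-sided inverse of $M_\varphi$, giving $\Ind(M_\varphi)=0$. When $n=1$, let $z_1,\ldots,z_k$ enumerate the zeros of $\varphi$ in $\D$ with multiplicity; by the argument principle, $k$ equals the winding number of $t\mapsto\varphi(re^{it})$ once $r\in(0,1)$ is close enough to $1$ that all $z_j$ lie in $\bnr$. Write $\varphi=B\psi$ with $B$ the finite Blaschke product carrying the $z_j$ and $\psi$ zero-free on $\D$; applying the zero-free argument above to $\psi$ yields $\Ind(M_\psi)=0$. For each single-zero factor $b_j(z)=(z-z_j)/(1-\overline{z_j}z)$, one has $b_j H^2_\beta=\{f\in H^2_\beta:f(z_j)=0\}$, a closed codimension-one subspace (point evaluation at $z_j$ being bounded on $H^2_\beta$), so $\Ind(M_{b_j})=-1$. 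Multiplicativity of the Fredholm index then yields $\Ind(M_\varphi)=\Ind(M_B)+\Ind(M_\psi)=-k$, as desired.

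The main obstacles are analytic rather than topological. First, inverting a bounded-below element of $\mb$—i.e., showing $1/\psi\in\mb$ whenever $\psi\in\mb$ and $|\psi|\geq\delta$ on $\bn$—is classical for $\beta=0$ but demands genuine work for arbitrary $\beta$, and this step is used both in the $n>1$ case and in handling $\psi$ when $n=1$. Second, the division identity $b_j H^2_\beta=\{f\in H^2_\beta:f(z_j)=0\}$ is a Hardy-Sobolev version of the classical $H^2$ division lemma and requires separate verification for general real $\beta$. I expect both to follow from properties of $\mb$ developed earlier in the paper.
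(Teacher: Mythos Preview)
Your proposal is correct and follows the paper's strategy closely: derive the Fredholm characterization from Theorem~B, reduce the index question to invertibility of $M_\varphi$ when $n>1$, and in the one-variable case factor out the (finitely many) interior zeros and compute the index of each linear factor as $-1$ via the codimension-one range identity. The two analytic obstacles you single out---Proposition~\ref{7} (inversion of a bounded-below multiplier) and the division lemma $(z-a)H^2_\beta=\{f\in H^2_\beta:f(a)=0\}$---are exactly the facts the paper relies on in Lemma~\ref{11} and Proposition~\ref{7}; note in particular that the division lemma is what makes your quotient $\psi=\varphi/B$ a multiplier, not just the Blaschke index computation.

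The one genuine difference is the $n>1$ step. The paper, following Lemma~\ref{10}, uses the Hartogs extension theorem: once $|\varphi|\ge\delta$ on a shell $\bn-\bnr$, the function $1/\varphi$ extends holomorphically to all of $\bn$, forcing $\varphi$ to be nonvanishing and bounded below globally. You instead argue that any zero of a nonconstant holomorphic function in several variables lies on an uncountable analytic hypersurface, so linear independence of the reproducing kernels $\{K_w:\varphi(w)=0\}\subset\ker M_\varphi^*$ would violate Fredholmness. Both are standard SCV facts yielding the same conclusion; the Hartogs route is slightly more direct because it simultaneously produces the lower bound on $|\varphi|$, whereas your route needs the additional (easy) compactness observation on $\overline{r\bn}$. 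For $n=1$ your Blaschke-product factorization $\varphi=B\psi$ and the paper's polynomial factorization $\varphi=p\psi$ differ only by the invertible $H^\infty$ factor $\prod(1-\overline{z_j}z)^{-1}$, so the arguments are interchangeable.
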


Hardy-Sobolev spaces have been studied in various papers in the literature, including
\cite{AB1, AB2, BO, CO1, CO2, CZ, CV, OF}. However, several different names have 
appeared for such spaces. For example, they were called weighted Bergman spaces 
in \cite{ZZ}, they were called holomorphic Sobolev spaces in \cite{BeBu}, and they 
were called Besov-Sobolev spaces in \cite{VW}.

It is well known that the Hardy-Sobolev spaces $H^2_\beta$ include several important 
spaces as special cases: the Hardy space ($\beta=0$), the Bergman space ($\beta=-1/2$), 
the Dirichlet space ($\beta=n/2$), and the Drury-Arveson space ($\beta=(n-1)/2$).
Our main results are certainly well known in the case of Hardy and Bergman spaces.
However, these results are highly nontrivial in the general case. In particular, we mention
that our results are new for the case of the Drury-Arveson space in higher dimensions 
and the case of the Dirichlet space even in dimension $1$. In fact, these two cases are the
main motivation for our general theory here. See \cite{Ar, Dr} for the original introduction
of the Drury-Arveson space and \cite{BBF1, BBF2, FX1, FX2, RS, S} for some recent work
about operator theory and function theory for the Drury-Arveson space.

The difficulty in the general case stems from the fact that the underlying
space is defined by properties of a certain derivative. This makes some problems that are
obvious for Bergman and Hardy spaces very difficult for the general case. For example, in
the computation of spectrum, we need to show that if $\varphi$ is a multiplier of the
space $H^2_\beta$ and $\lambda$ is a constant such that $|\lambda-\varphi(z)|\ge
\delta$ for some positive $\delta$ and all $z\in\bn$, then the function
$1/(\lambda-\varphi)$ is also a multiplier of $H^2_\beta$. If the space is defined in terms of 
the integrability of the function itself (such as the Hardy space and the Bergman space), this 
desired property is obvious. However, if the space is defined in terms of the integrability of a 
certain fractional derivative of $f$, then the problem becomes challenging.

\section{Some characterizations of $H^2_\beta$}

Recall that $H^2$ is the space of holomorphic functions $f$ on $\bn$ such that
$$\|f\|^2_{H^2}=\sup_{0<r<1}\ins|f(r\zeta)|^2\,d\sigma(\zeta)<\infty,$$
where $d\sigma$ is the normalized Lebesgue measure on the unit sphere
$\sn=\partial\bn$. It is well known that functions $f\in H^2$ have radial limits
$$f(\zeta)=\lim_{r\to1^-}f(r\zeta)$$
for almost all $\zeta\in\sn$. Moreover, the radial limit function $f(\zeta)$ above belongs
to $L^2(\sn,d\sigma)$. The inner product in $H^2$ can then be written as
$$\langle f,g\rangle_0=\langle f,g\rangle_{H^2}=\ins f(\zeta)\overline{g(\zeta)}
\,d\sigma(\zeta),$$
and its induced norm on $H^2$ is given by
$$\|f\|^2_0=\|f\|^2_{H^2}=\ins|f(\zeta)|^2\,d\sigma(\zeta).$$

It is well known that a function $f\in H(\bn)$ belongs to $H^2$ if and only if
$$\inb|Rf(z)|^2(1-|z|^2)\,dv(z)<\infty,$$
where $dv$ is normalized volume measure on $\bn$. See \cite{Ru, ZZ, Z}. More 
generally, for any $t>-1$, we consider the weighted volume measure
$$dv_t(z)=c_t(1-|z|^2)^t\,dv(z),$$
where $c_t$ is a positive normalizing constant such that $v_t(\bn)=1$. The spaces
$$A^2_t=L^2(\bn,dv_t)\cap H(\bn)$$
are called weighted Bergman spaces (with standard weights).

We begin with the following well-known Hardy-Littlewood type theorem for weighted 
Bergman spaces.

\begin{lem}
Suppose $f\in H(\bn)$, $p>0$, $t>-1$, and $\beta$ is real. If $p\beta+t>-1$, then
$$\inb|f(z)|^p\,dv_t(z)<\infty$$
if and only if
$$\inb(1-|z|^2)^{p\beta}|R^\beta f(z)|^p\,dv_t(z)<\infty.$$
Moreover, the two integrals above are comparable when $f(0)=0$, namely, each 
one dominates the other by a positive constant multiple that is independent of $f$.
\label{1}
\end{lem}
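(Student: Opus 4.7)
The plan is to reduce the claim to the boundedness of certain integral operators on weighted $L^p$-spaces on the ball, and then invoke the standard Forelli-Rudin estimates. This is a classical Hardy-Littlewood type theorem; for the disk it is due to Hardy and Littlewood, and its extension to the ball with fractional radial derivatives appears in Zhu's \emph{Spaces of Holomorphic Functions in the Unit Ball} and is built on the methods in \cite{Ru, ZZ, Z}.

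First I would exploit the reproducing formula: for any sufficiently large $s$ and any $f \in A^2_s$,
$$f(z) = \inb \frac{f(w)}{(1 - \langle z, w\rangle)^{n+1+s}} \, dv_s(w).$$
Differentiating term by term in the homogeneous expansion of the kernel shows that $R^\beta f(z)$ is represented by a similar integral whose kernel is asymptotic to $(1-\langle z, w\rangle)^{-(n+1+s+\beta)}$, plus smoother lower-order pieces (including a finite-rank correction involving $f(0)$). Consequently one obtains a pointwise estimate of the form
$$|R^\beta f(z)| \le C \inb \frac{|f(w)|\,(1-|w|^2)^s}{|1 - \langle z, w\rangle|^{n+1+s+\beta}}\,dv(w) + \text{(lower-order terms)}.$$

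Second, I would apply the standard boundedness theorem for the integral operator
$$T_{a,b,c}\, g(z) = (1-|z|^2)^a \inb \frac{(1-|w|^2)^b\, g(w)}{|1-\langle z, w\rangle|^c}\,dv(w)$$
on $L^p(\bn, dv_t)$. Choosing $a = \beta$, $b = s$ large, and $c = n+1+s+\beta$, the Forelli-Rudin hypotheses reduce precisely to the stated condition $p\beta + t > -1$ and yield
$$\inb (1-|z|^2)^{p\beta}\,|R^\beta f(z)|^p \, dv_t(z) \le C \inb |f(z)|^p \, dv_t(z).$$

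Third, for the reverse direction I would apply the same argument with $-\beta$ in place of $\beta$ to the function $g = R^\beta f$, using the formal identity $R^{-\beta} g = f - f(0)$; the contribution of $|f(0)|$ is absorbed into the constant when one compares the two integrals (and disappears entirely when $f(0) = 0$, giving the strict comparability claim). The main technical obstacle is the parameter bookkeeping: one must verify that the Forelli-Rudin admissibility range corresponds \emph{exactly} to $p\beta + t > -1$ in both directions, and that the smoother lower-order terms in the kernel expansion contribute only finite-rank pieces or integrals against much tamer kernels that are trivially bounded. Extra care is needed when $\beta < 0$, because then $R^\beta$ acts as fractional integration rather than differentiation; however, the integral-operator framework is symmetric under the change $\beta \mapsto -\beta$ and the Forelli-Rudin machinery applies uniformly to both regimes.
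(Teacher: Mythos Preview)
Your sketch is correct and is in fact the standard Forelli--Rudin machinery that underlies the results the paper ultimately cites, but the paper's own proof takes a much shorter route. The paper simply quotes the case $\beta\ge0$ as known (referring to Theorem~2.19 of \cite{Z}, Theorem~4.2 of \cite{CZ}, and Theorem~14 of \cite{ZZ}, where the modified fractional derivative $R^{s,\beta}$ tailored to the Bergman kernel is used), and then disposes of $\beta<0$ by a one-line substitution: set $\alpha=-\beta>0$ and $g=R^\beta f$, rewrite the weighted condition on $R^\beta f$ as $g\in L^p(dv_{p\beta+t})$, and apply the already-established nonnegative case with exponent $\alpha$ and weight $p\beta+t$ to recover $\int|f|^p\,dv_t<\infty$. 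Your reverse direction is essentially this same substitution, just phrased in the integral-operator language.

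What your approach buys is a self-contained argument that treats all real $\beta$ uniformly rather than reducing to a cited black box. Two points would need to be made precise if you carry it out in full: (i) $R^\beta$ applied to the Bergman kernel is not \emph{exactly} a shifted kernel---this is why the literature introduces the operators $R^{s,\beta}$---so your ``smoother lower-order pieces'' must be identified and shown to give bounded contributions; (ii) the Schur-test boundedness of $T_{a,b,c}$ on $L^p(dv_t)$ is standard only for $p\ge1$, and for $0<p<1$ one needs subharmonicity or atomic-decomposition arguments in place of a direct operator bound.
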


\begin{proof}
The case $\beta\ge0$ is well known to experts in the field. See \cite{Gr, ZZ, Z}. In 
particular, it was shown in Theorem 4.2 of \cite{CZ} that the fractional derivative 
$R^\beta$ can be replaced by another fractional derivative $R^{s,\beta}$, and in 
Theorem 2.19 of \cite{Z}, our desired result here was proved in terms of $R^{s,\beta}$. 
See Theorem 14 of \cite{ZZ} as well. 

If $\beta<0$, we let $\alpha=-\beta>0$ and let $g=R^\beta f$. Then the condition
$$\inb(1-|z|^2)^{p\beta}|R^\beta f(z)|^p\,dv_t(z)<\infty$$
can be written as
$$\inb|g(z)|^p\,dv_{p\beta+t}(z)<\infty,$$
which, according to the nonnegative case in the previous paragraph, is equivalent to
$$\inb(1-|z|^2)^{p\alpha}|R^\alpha g(z)|^p\,dv_{p\beta+t}(z)<\infty,$$
or
$$\inb|f(z)|^p\,dv_t(z)<\infty.$$
This proves the desired result.
\end{proof}

The following result characterizes Hardy-Sobolev spaces in terms of integrability with 
respect to weighted volume measures.

\begin{prop}
Suppose $\beta\in\R$ and $f\in H(\bn)$. Then the following conditions are equivalent.
\begin{enumerate}
\item[(a)] $f\in H^2_\beta$.
\item[(b)] $R^{\beta+1}f\in A^2_1$.
\end{enumerate}
If $N$ is a nonnegative integer with $N>\beta$, then the conditions above are 
also equivalent to
\begin{enumerate}
\item[(c)] $R^Nf\in A^2_{2(N-\beta)-1}$.
\end{enumerate}
\label{2}
\end{prop}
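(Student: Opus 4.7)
The plan is to handle (a) $\Longleftrightarrow$ (b) directly from the classical radial-derivative characterization of $H^2$, and then to derive (b) $\Longleftrightarrow$ (c) as an essentially mechanical application of Lemma \ref{1} together with the semigroup identity $R^\alpha R^\gamma = R^{\alpha+\gamma}$.

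For (a) $\Longleftrightarrow$ (b), by definition $f \in H^2_\beta$ exactly when $R^\beta f \in H^2$. Setting $g = R^\beta f$ and invoking the classical fact quoted in Section~2 that a holomorphic $g$ on $\bn$ belongs to $H^2$ if and only if $\int_{\bn} |Rg(z)|^2 (1-|z|^2)\, dv(z) < \infty$, I get that $f \in H^2_\beta$ is equivalent to $\int_{\bn} |R^{\beta+1}f(z)|^2 (1-|z|^2)\, dv(z) < \infty$, which (up to the normalizing constant in $dv_1$) is exactly $R^{\beta+1}f \in A^2_1$. No issue arises at $z=0$ because $Rg(0) = 0$ automatically and $g(0) = R^\beta f(0) = 0$ from the definition of $R^\beta$.

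For (b) $\Longleftrightarrow$ (c), put $g = R^{\beta+1}f$ and $\alpha = N - \beta - 1$, noting that $\alpha$ may well be negative (e.g.\ when $\beta < N < \beta+1$). Then $R^\alpha g = R^{\alpha+\beta+1}f = R^N f$. I would then apply Lemma \ref{1} to $g$ with $p=2$, $t=1$, and with the lemma's real parameter taken to be $\alpha$: the admissibility condition $p\alpha + t > -1$ becomes $2(N-\beta-1)+1 > -1$, i.e.\ $N > \beta$, which is precisely the assumed hypothesis. The conclusion of Lemma \ref{1} reads
\[
\int_{\bn} |g(z)|^2\, dv_1(z) < \infty \;\Longleftrightarrow\; \int_{\bn}(1-|z|^2)^{2\alpha}|R^N f(z)|^2\, dv_1(z) < \infty,
\]
and the right-hand integrand is $|R^N f|^2$ against a constant multiple of $dv_{2(N-\beta)-1}$, which is (c).

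The only potential obstacle is subtle rather than hard: one must make sure Lemma \ref{1} is being used with its real parameter, which is allowed to be negative, and to check that the admissibility threshold $p\alpha + t > -1$ is exactly the stated condition $N > \beta$. Everything else is bookkeeping via $R^\alpha R^\gamma = R^{\alpha+\gamma}$ and the definition $dv_s = c_s(1-|z|^2)^s\, dv$.
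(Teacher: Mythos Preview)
Your proof is correct and follows essentially the same route as the paper: the equivalence (a)\,$\Leftrightarrow$\,(b) is the definition of $H^2_\beta$ combined with the $H^2$ characterization $g\in H^2 \Leftrightarrow Rg\in A^2_1$ and $RR^\beta=R^{\beta+1}$, and the equivalence (b)\,$\Leftrightarrow$\,(c) is a direct application of Lemma~\ref{1} with $p=2$, $t=1$, and fractional order $\alpha=N-\beta-1$. Your explicit verification that the admissibility condition $p\alpha+t>-1$ reduces to $N>\beta$, and your remark that $\alpha$ may be negative (which Lemma~\ref{1} allows), are slightly more careful than the paper's version but add nothing new.
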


\begin{proof}
The equivalence of (a) and (b) follows from the definition of $H^2_\beta$, the fact that 
a function $g\in H(\bn)$ belongs to $H^2$ if and only if $Rg\in A^2_1$, and the fact that
$RR^\beta=R^{\beta+1}$.

If $N$ is a nonnegative integer such that $\beta<N$, we have $2(N-\beta)-1>-1$.
It follows from the equivalence of (a) and (b) that $f\in H^2_\beta$ if and only if
$$\inb|R^{\beta+1}f(z)|^2(1-|z|^2)\,dv(z)<\infty.$$
By Lemma \ref{1}, the condition above is equivalent to
$$\inb|R^{N-\beta-1}R^{\beta+1}f(z)|^2(1-|z|^2)^{1+2(N-\beta-1)}\,dv(z)<\infty,$$
or
$$\inb|R^Nf(z)|^2(1-|z|^2)^{2(N-\beta)-1}\,dv(z)<\infty.$$
This proves the equivalence of (a) and (c).
\end{proof}

Write the integral condition above as
\begin{equation}
\inb(1-|z|^2)^{2N}|R^Nf(z)|^2(1-|z|^2)^{-(2\beta+1)}\,dv(z)<\infty.
\label{eq1}
\end{equation}
With the language of generalized Bergman spaces from \cite{ZZ}, the condition in
(\ref{eq1}) above tells us the space $H^2_\beta$ is the generalized Bergman space
$A^2_{-(2\beta+1)}$ that was defined in \cite{ZZ}. We single out a few special cases
that are of particular importance in complex analysis and functional analysis. 

First, if $\beta=0$, $H^2_\beta$ of course becomes the classical 
Hardy space $H^2$, whose reproducing kernel is
$$K(z,w)=\frac1{(1-\langle z,w\rangle)^n}.$$ 
If $-(2\beta+1)=0$, or $\beta=-1/2$, it follows from \cite{ZZ, Z} that $H^2_\beta$ becomes 
the ordinary (unweighted) Bergman space $A^2$, whose reproducing kernel is
$$K(z,w)=\frac1{(1-\langle z,w\rangle)^{n+1}}.$$
If $-(2\beta+1)=-n$, or $\beta=(n-1)/2$, it follows from \cite{ZZ} that $H^2_\beta$ is
the so-called Drury-Arveson space, whose reproducing kernel is
$$K(z,w)=\frac1{1-\langle z,w\rangle}.$$
Finally, if $-(2\beta+1)=-(n+1)$, or $\beta=n/2$, it follows from \cite{ZZ} that 
$H^2_\beta$ is the Dirichlet space of the unit ball, whose reproducing kernel is
$$1+\log\frac1{1-\langle z,w\rangle}.$$
Note that, in order to obtain the specific form of the various reproducing kernels
above, it may be necessary to adjust the inner product in $H^2_\beta$ to a slightly
different (but equivalent) form. Details are left to the interested reader.

\section{Multipliers of $H^2_\beta$}

Although our main focus here is not on characterizations of pointwise multipliers (which
is a notoriously difficult problem in general), we still want to look at a couple of special 
cases in which the multipliers are relatively easy to determine.

It is well known that every pointwise multiplier $\varphi$ of $H^2_\beta$ must be in
$H^\infty$, the space of all bounded holomorphic functions. Also, the size of the space
$H^2_\beta$ decreases as $\beta$ increases. Thus
$$H^\infty\subset H^2\subset H^2_\beta$$
for $\beta\le0$.

\begin{prop}
For $\beta\le0$ we have $\mb=H^\infty$.
\label{3}
\end{prop}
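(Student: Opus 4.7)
The plan is to prove the two inclusions $\mb\subset H^\infty$ and $H^\infty\subset\mb$ separately.

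For the first inclusion, I would appeal to the reproducing kernel Hilbert space structure of $H^2_\beta$. Since $H^2_\beta$ contains all polynomials and point evaluation at each $z\in\bn$ is bounded (this is clear from the expansion characterization: if $f=\sum f_k$ with $\sum(1+k^{2\beta})\|f_k\|_{H^2}^2<\infty$, then the Taylor coefficients are controlled by the $H^2_\beta$-norm, giving bounded point evaluations on compact subsets and hence a reproducing kernel $K_z$). The standard identity $M_\varphi^*K_z=\overline{\varphi(z)}K_z$ then yields $|\varphi(z)|\le\|M_\varphi\|$ for every $z\in\bn$, so $\varphi\in H^\infty$.

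For the reverse inclusion I would split into two cases according to whether $\beta=0$ or $\beta<0$. When $\beta=0$, the space $H^2_\beta$ is the classical Hardy space $H^2$, for which $\|\varphi f\|_{H^2}\le\|\varphi\|_\infty\|f\|_{H^2}$ is immediate from the definition via integrals over spheres $r\sn$. When $\beta<0$, I would apply Proposition \ref{2} with $N=0$ (permissible since $0>\beta$ and $2(0-\beta)-1=-2\beta-1>-1$), which identifies $H^2_\beta$ with the weighted Bergman space $A^2_{-2\beta-1}$ as sets, with equivalent norms. Then for $\varphi\in H^\infty$ and $f\in H^2_\beta$,
$$\inb|\varphi(z)f(z)|^2\,dv_{-2\beta-1}(z)\le\|\varphi\|_\infty^2\inb|f(z)|^2\,dv_{-2\beta-1}(z)<\infty,$$
so $\varphi f\in A^2_{-2\beta-1}=H^2_\beta$, which is what we want.

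There is no serious obstacle here; the only delicate point is ensuring that the equivalence in Proposition \ref{2} is a genuine set equality (rather than a one-sided bound), so that $\varphi f\in A^2_{-2\beta-1}$ really translates back to $\varphi f\in H^2_\beta$. This is built into the statement of the proposition. The boundedness of $M_\varphi$ on $H^2_\beta$ then follows automatically from the closed-graph theorem, as noted in the introduction.
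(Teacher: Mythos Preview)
Your proof is correct and follows essentially the same route as the paper: the paper simply observes that $H^2_0=H^2$ and, for $\beta<0$, that $H^2_\beta=A^2_{-(2\beta+1)}$ (your appeal to Proposition~\ref{2} with $N=0$), and then invokes the well-known fact that the multiplier algebra of each of these spaces is $H^\infty$. The only difference is that you spell out the inclusion $\mb\subset H^\infty$ via the reproducing kernel identity, whereas the paper takes this for granted (it is stated as well known just before the proposition).
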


\begin{proof}
If $\beta=0$, $H^2_\beta$ is the Hardy space $H^2$, whose multiplier algebra is
of course $H^\infty$. If $\beta<0$, it follows from (\ref{eq1}) that $H^2_\beta$ is 
the weighted Bergman space $A^2_t$ with $t=-(2\beta+1)>-1$. The multiplier algebra 
for such a Bergman space is clearly $H^\infty$.
\end{proof}

\begin{prop}
If $\beta>n/2$, then every function in $H^2_\beta$ is continuous up to the boundary,
$H^2_\beta$ is an algebra, and $\mb=H^2_\beta$.
\label{4}
\end{prop}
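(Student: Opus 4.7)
The plan is to reduce everything to a single reproducing-kernel computation. For $|\alpha|\ge 1$ the monomial $z^\alpha$ is homogeneous of degree $|\alpha|$, so $R^\beta z^\alpha = |\alpha|^\beta z^\alpha$ and $\|z^\alpha\|_\beta^2 = |\alpha|^{2\beta}\|z^\alpha\|_{H^2}^2 = |\alpha|^{2\beta}\alpha!(n-1)!/(n-1+|\alpha|)!$, while $\|1\|_\beta=1$. Grouping the kernel sum $K(z,z)=\sum_\alpha |z^\alpha|^2/\|z^\alpha\|_\beta^2$ by homogeneous degree yields
\[
K(z,z) \;=\; 1 + \sum_{k\ge 1}\frac{1}{k^{2\beta}}\binom{n-1+k}{k}|z|^{2k}.
\]
Since $\binom{n-1+k}{k}\sim k^{n-1}/(n-1)!$, the assumption $\beta>n/2$ is precisely what makes $\sum_k k^{n-1-2\beta}$ converge, so $K(z,z)$ is uniformly bounded on $\bn$. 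The pointwise estimate $|f(z)|\le\sqrt{K(z,z)}\,\|f\|_\beta$ combined with Cauchy-Schwarz applied to the monomial expansion of $f$ shows that the Taylor series of any $f\in H^2_\beta$ converges absolutely and uniformly on $\overline{\bn}$, giving continuous extension to the closed ball.

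The heart of the proposition is the algebra bound $\|fg\|_\beta\le C\|f\|_\beta\|g\|_\beta$. I would write $f=\sum_i f_i$, $g=\sum_j g_j$ homogeneously and $fg=\sum_k h_k$ with $h_k=\sum_{i+j=k}f_ig_j$, so that $\|fg\|_\beta^2\asymp |f(0)g(0)|^2+\sum_{k\ge 1}k^{2\beta}\|h_k\|_{H^2}^2$. The sharp Bernstein-type bound
\[
\|f_i\|_{L^\infty(\sn)}\le\binom{n-1+i}{i}^{1/2}\|f_i\|_{H^2}\le C(1+i)^{(n-1)/2}\|f_i\|_{H^2},
\]
which is just the kernel estimate of the first paragraph applied to the finite-dimensional space of degree-$i$ homogeneous polynomials inside $H^2$, gives $\|f_ig_j\|_{H^2}\le C\min(1+i,1+j)^{(n-1)/2}\|f_i\|_{H^2}\|g_j\|_{H^2}$. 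Setting $A_i=(1+i)^\beta\|f_i\|_{H^2}$ and $B_j=(1+j)^\beta\|g_j\|_{H^2}$ (so that $\sum_i A_i^2\asymp\|f\|_\beta^2$ and likewise for $B$), and using $(1+\max(i,j))^{-\beta}\le Ck^{-\beta}$ whenever $i+j=k$, the estimate rearranges to
\[
k^\beta\|h_k\|_{H^2}\le C\sum_{i+j=k}(1+\min(i,j))^{-\gamma}A_iB_j,\qquad \gamma:=\beta-\tfrac{n-1}{2}.
\]
The hypothesis $\beta>n/2$ is exactly the statement $\gamma>1/2$, which makes $\sum_i(1+i)^{-2\gamma}<\infty$; Cauchy-Schwarz in the inner sum followed by summation over $k$ collapses the right side to $C\|f\|_\beta^2\|g\|_\beta^2$.

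With the algebra bound in hand, the identification $\mb=H^2_\beta$ is immediate: every multiplier $\varphi$ satisfies $\varphi=\varphi\cdot 1\in H^2_\beta$, and conversely any $\varphi\in H^2_\beta$ gives $\varphi f\in H^2_\beta$ for every $f\in H^2_\beta$ and is therefore a multiplier by the closed graph theorem. The main obstacle is the convolution estimate in the middle step; the tension between the Bernstein factor $i^{(n-1)/2}$ and the summability condition $\sum(1+i)^{-2\gamma}<\infty$ is exactly what pins down $\beta>n/2$ as the sharp threshold, matching the boundedness of $K(z,z)$ found in the first step.
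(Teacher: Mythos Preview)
Your argument is correct and takes a genuinely different route from the paper's own proof. The paper works through the integral characterization $f\in H^2_\beta\iff R^Nf\in A^2_{2(N-\beta)-1}$ (Proposition~\ref{2}): for continuity it passes through the Bloch space, and for the algebra property it expands $R^N(fg)$ via the Leibniz rule, applies H\"older to the cross terms $R^kf\,R^{N-k}g$, and then uses Lemma~\ref{1} together with the Bloch-type bound on $(1-|z|^2)^NR^Nf$ to close the estimates. Your approach instead stays entirely on the coefficient side: the kernel computation $K(z,z)=1+\sum_{k\ge1}k^{-2\beta}\binom{n-1+k}{k}|z|^{2k}$ immediately gives both the boundedness of point evaluations and absolute uniform convergence of the Taylor series on $\overline{\bn}$, and the algebra bound is obtained from the sharp Bernstein estimate on homogeneous polynomials plus a discrete convolution inequality. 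Your route is more elementary (no fractional-derivative machinery, no Bloch space) and makes the role of the threshold $\beta>n/2$ completely transparent, since it appears twice as the same summability condition $\sum_k k^{n-1-2\beta}<\infty$. The paper's route, on the other hand, is better aligned with the tools used elsewhere in the paper (in particular Proposition~\ref{2} and Lemma~\ref{1}), so it fits the overall narrative more naturally.
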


\begin{proof}
Since each space $H^2_\beta$ contains the constant function $1$, we always have
$\mb\subset H^2_\beta$.

Suppose $\beta>n/2$ and $f\in H^2_\beta$. Then $R^\beta f\in H^2$, so there exists
a positive constant $C$ such that
$$|R^{\beta}f(z)|\le\frac C{(1-|z|^2)^{n/2}},\qquad z\in\bn.$$
See Theorem 4.17 of \cite{Z}. It follows that
$$(1-|z|^2)^{\frac n2}|R^{\frac n2}R^{\beta-\frac n2}f(z)|\le C,\qquad z\in\bn.$$
This shows that the function $R^{\beta-\frac n2}f(z)$ belongs to the Bloch space
(see \cite{ZZ, Z} for example). It is well known that the fractional integral of any positive
order of a Bloch function is in the ball algebra.  Thus the function
$$f(z)=R^{\frac n2-\beta}R^{\beta-\frac n2}f(z)$$
is continuous up to the boundary.

Finally, suppose $\beta>n/2$ and $f,g\in H^2_\beta$. Let $N$ be a positive integer
greater than $\beta$. By Proposition~\ref{2}, $R^Nf$ and $R^Ng$ both belong to 
$A^2_{2(N-\beta)-1}$. We proceed to show that $R^N(fg)$ also belongs to
$A^2_{2(N-\beta)-1}$, which will prove that $H^2_\beta$ is an algebra and
$\mb=H^2_\beta$.

If $N=1$, then the desired result follows from
$$R(fg)=fRg+gRf$$
and the fact that both $f$ and $g$ are in $H^\infty$.

If $N=2$, we have
$$R^2(fg)=fR^2g+2RfRg+gR^2f.$$
The first and third term on the right-hand side both belong to $A^2_{2(N-\beta)-1}$,
because $R^2f$ and $R^2g$ both belong to $A^2_{2(N-\beta)-1}$ and $f$ and $g$ are
both bounded. To estimate the middle term, let
$$I=\inb|Rf(z)Rg(z)|^2(1-|z|^2)^{2(N-\beta)-1}\,dv(z).$$
By Holder's inequality, we have $I^2\le I(f)I(g)$, where
$$I(f)=\inb|Rf(z)|^4(1-|z|^2)^{2(N-\beta)-1}\,dv(z).$$
By Lemma \ref{1}, there is a positive constant $C$ such that
$$I(f)\le C\inb|R^2f(z)|^4(1-|z|^2)^{4+2(N-\beta)-1}\,dv(z).$$
Since $f$ belongs to the Bloch space, the function $(1-|z|^2)^2R^2f(z)$ is bounded. 
Thus there exists another positive constant $C$ such that
$$I(f)\le C\inb|R^2f(z)|^2(1-|z|^2)^{2(N-\beta)-1}\,dv(z)<\infty.$$
Similarly, $I(g)<\infty$. Thus $I<\infty$.

The case of more general $N$ is proved in the same manner. More specifically, by
the binomial formula, we have
$$R^N(fg)=\sum_{k=0}^N\binom{N}{k}R^kfR^{N-k}g.$$
The two terms corresponding to $k=0$ and $k=N$ are disposed of easily, as both $f$
and $g$ are bounded. Fix $0<k<N$ and consider the integral
$$I=\inb|R^kf(z)R^{N-k}g(z)|^2(1-|z|^2)^{2(N-\beta)-1}\,dv(z).$$
For any $1<p<\infty$ with $1/p+1/q=1$, we use Holder's inequality to obtain
$$I\le I_1^{1/p}I_2^{1/q},$$
where
$$I_1=\inb|R^kf(z)|^{2p}(1-|z|^2)^{2(N-\beta)-1}\,dv(z),$$
and
$$I_2=\inb|R^{N-k}g(z)|^{2q}(1-|z|^2)^{2(N-\beta)-1}\,dv(z).$$
By Lemma~\ref{1}, there exists a positive constant $C$ such that
\begin{eqnarray*}
I_1&\le& C\inb|R^{N-k}R^kf(z)|^{2p}(1-|z|^2)^{2p(N-k)+2(N-\beta)-1}\,dv(z)\\
&=&C\inb|R^Nf(z)|^{2p}(1-|z|^2)^{2p(N-k)+2(N-\beta)-1}\,dv(z).
\end{eqnarray*}
Let us now choose $p=N/k$ and $q=N/(N-k)$. Write $2p=2+2(p-1)$, observe that
$$2p(N-k)=2(p-1)N,$$
and use the boundedness of the function $(1-|z|^2)^NR^Nf(z)$ (which follows from
$f\in H^\infty$ and the fact that $H^\infty$ is contained in the Bloch space). We obtain 
another positive constant $C$ such that
$$I_1\le C\inb|R^Nf(z)|^2(1-|z|^2)^{2(N-\beta)-1}\,dv(z)<\infty.$$
The proof for $I_2<\infty$ is the same. This completes the proof of the proposition.
\end{proof}

The determination of the multiplier algebra $\mb$ for $0<\beta\le n/2$ is a much
more challenging (open) problem. Our focus here is not to characterize the multiplier
algebra. Instead, we will assume that $\varphi\in\mb$ and consider the spectral
properties of the bounded operator $M_\varphi: H^2_\beta\to H^2_\beta$.

\section{A differentiation formula}

A key step in the computation of spectrum for holomorphic multiplication operators
on $H^2_\beta$ is the following: if $\varphi$ is a multiplier of $H^2_\beta$ and if
$|\varphi(z)|\ge\delta$ for some positive constant $\delta$ and all $z\in\bn$, then
$1/\varphi$ is a multiplier of $H^2_\beta$ as well. This is obvious in the case of Hardy
and Bergman spaces. But when the space $H^2_\beta$ has to be defined in terms of
derivatives, the desired result becomes highly nontrivial.

In this section we prove a general formula of differentiation that will be critical to our
spectral analysis of multiplication operators on $H^2_\beta$. The formula is clearly of
independent interest, but unfortunately, its proof is not easy.

\begin{thm}
Suppose $N$ is a positive integer and $I$ is an open interval. If $f$ and $g$ are functions
on $I$ that are differentiable up to order $N$. Then
\begin{equation}
\sum_{k=0}^N(-1)^k\binom{N}{k}g^kD^{N-1}(g^{N-k}f)=0,
\label{eq2}
\end{equation}
where
$$D^n=\frac{d^n}{dx^n},\qquad n\ge0,$$
is the $n$-th order differential operator. 
\label{5}
\end{thm}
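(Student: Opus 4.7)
The plan is to reinterpret the left side of (\ref{eq2}) as the result of iterating a commutator with $M_g$ (multiplication by $g$), and then argue that this commutator strictly lowers the order of a linear differential operator. For any linear operator $A$ acting on smooth functions, write $\mathrm{ad}_A(X):=AX-XA$. A routine induction on $N$ using Pascal's rule establishes the purely algebraic identity
\begin{equation*}
\sum_{k=0}^N(-1)^k\binom{N}{k}A^k X A^{N-k}=(-\mathrm{ad}_A)^N X,
\end{equation*}
valid for every operator $X$. Specializing to $A=M_g$ and $X=D^{N-1}$ and evaluating at $f$ shows that (\ref{eq2}) is equivalent to the operator identity $(-\mathrm{ad}_{M_g})^N D^{N-1}=0$.

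The next step is to record the computational ingredient that makes the iteration collapse. The Leibniz formula gives $D^k M_g=\sum_{j=0}^k\binom{k}{j}M_{g^{(j)}}D^{k-j}$, so that
\begin{equation*}
[M_g,D^k]=-\sum_{j=1}^k\binom{k}{j}M_{g^{(j)}}D^{k-j}
\end{equation*}
is a linear differential operator of order at most $k-1$. Using the fact that $\mathrm{ad}_{M_g}$ is a derivation on the algebra of linear differential operators and that it annihilates every pure multiplication operator $M_h$, one proves by an easy induction on $k$ that $\mathrm{ad}_{M_g}$ sends any linear differential operator of order at most $k$ to one of order at most $k-1$, with the convention that an operator of negative order is the zero operator.

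Applying this reduction $N$ times to $D^{N-1}$, which has order $N-1$, forces $\mathrm{ad}_{M_g}^N(D^{N-1})$ to have order at most $-1$, hence to vanish identically. Combined with the commutator rewriting above, this yields $(-\mathrm{ad}_{M_g})^N D^{N-1}=0$, and evaluating at $f$ is precisely (\ref{eq2}).

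The main obstacle is not the execution but the initial observation: without recognizing the alternating sum $\sum_{k=0}^N(-1)^k\binom{N}{k}A^k X A^{N-k}$ as an iterated commutator $(-\mathrm{ad}_A)^N X$, the identity looks mysterious and there is no obvious handle. Once that structure is exposed, the problem reduces to the elementary fact that commuting with a multiplication operator lowers differential order by at least one, which needs nothing beyond the Leibniz formula. In particular no analysis enters; the identity is purely algebraic and ultimately a degree count.
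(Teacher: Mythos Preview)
Your proof is correct and takes a genuinely different route from the paper's. The paper proves (\ref{eq2}) by a direct induction on $N$: assuming the identity at level $N$, they differentiate it, substitute $gf$ for $f$, multiply by $g$ and differentiate again, then recombine the resulting sums via Pascal's rule to reach level $N+1$. It is a hands-on verification with no structural interpretation, and the authors themselves remark that they tried and failed to find a simpler argument. Your approach supplies exactly that: the recognition $\sum_{k=0}^N(-1)^k\binom{N}{k}A^kXA^{N-k}=(-\mathrm{ad}_A)^NX$ converts the problem into showing $\mathrm{ad}_{M_g}^N(D^{N-1})=0$, which follows immediately from the Leibniz formula because $[M_g,\,\cdot\,]$ drops the order of a linear differential operator by one. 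This not only shortens the argument drastically but also explains the identity as a pure degree count in the filtered algebra of differential operators; it generalizes at once to partial differential operators and to the radial derivative $R$ used later in the paper. The paper's induction, by contrast, buys nothing extra---it is longer and less illuminating---so your method is a strict improvement.
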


\begin{proof}
For $n=1$ we simply write 
$$D=D^1=\frac{d}{dx}.$$
We will prove the result by mathematical induction on $N$.

When $N=1$, the desired formula takes the form $gf-gf=0$, which is trivial.

Now assume that the formula in (\ref{eq2}) holds for some positive integer $N$
and all functions $f$ and $g$ on $I$ that are differentiable up to order $N-1$. We
will show that the same formula also holds when $N$ is replaced by $N+1$. To this end,
we assume that $f$ and $g$ are arbitrary functions on $I$ that are differentiable up to
order $N$. We apply $D$ to (\ref{eq2}) to obtain
\begin{align}
\sum_{k=0}^N&(-1)^k\binom{N}{k}kg^{k-1}D(g)D^{N-1}(g^{N-k}f)\label{eq3}\\
&=-\sum_{k=0}^N(-1)^k\binom{N}{k}g^kD^N(g^{N-k}f).\nonumber
\end{align}
Since this formula holds for all $f$ and $g$ differentiable up to order $N$, we can
replace $f$ by $gf$ in (\ref{eq3}) to obtain
\begin{align}
\sum_{k=0}^N&(-1)^k\binom{N}{k}kg^{k-1}D(g)D^{N-1}(g^{N+1-k}f)\label{eq4}\\
&=-\sum_{k=0}^N(-1)^k\binom{N}{k}g^kD^N(g^{N+1-k}f).\nonumber
\end{align}

Multiply equation (\ref{eq2}) by $g$. We obtain
$$\sum_{k=0}^N(-1)^k\binom{N}{k}g^{k+1}D^{N-1}(g^{N-k}f)=0.$$
Applying $D$ to this equation, we get
\begin{align}
\sum_{k=0}^N&(-1)^k\binom{N}{k}(k+1)g^kD(g)D^{N-1}(g^{N-k}f)\label{eq5}\\
&+\sum_{k=0}^N(-1)^k\binom{N}{k}g^{k+1}D^N(g^{N-k}f)=0.\nonumber
\end{align}
It is elementary to check that the first term above is equal to 
\begin{align}
-\frac1N\sum_{k=0}^N&(-1)^k\binom{N}{k}kg^{k-1}D(g)D^{N-1}(g^{N+1-k}f)\label{eq6}\\
&+\frac{N+1}N g\sum_{k=0}^N(-1)^k\binom{N}{k}kg^{k-1}D(g)D^{N-1}(g^{N-k}f).\nonumber
\end{align}
In fact, if we denote by $S$ the sum of the two terms above, then
\begin{align*}
S&=-\frac1N\sum_{k=1}^{N}(-1)^k\binom{N}{k}kg^{k-1}D(g)D^{N-1}(g^{N+1-k}f)\\
&\qquad+\frac{N+1}N\sum_{k=0}^N(-1)^k\binom{N}{k}kg^kD(g)D^{N-1}(g^{N-k}f)\\
&=\frac1N\sum_{k=0}^{N-1}(-1)^k\binom{N}{k+1}(k+1)g^kD(g)D^{N-1}(g^{N-k}f)\\
&\qquad+\frac{N+1}N\sum_{k=0}^N(-1)^k\binom{N}{k}kg^kD(g)D^{N-1}(g^{N-k}f)\\
&=\sum_{k=0}^{N-1}(-1)^k\left[\frac{k+1}N\binom{N}{k+1}+
\frac{k(N+1)}{N}\binom{N}{k}\right]g^kD(g)D^{N-1}(g^{N-k}f)\\
&\qquad+(-1)^N(N+1)g^ND(g)D^{N-1}(f)\\
&=\sum_{k=0}^N(-1)^k\binom{N}{k}(k+1)g^kD(g)D^{N-1}(g^{N-k}f).
\end{align*}

Combining (\ref{eq5}) and (\ref{eq6}), we see that equation (\ref{eq5}) becomes
\begin{align*}
\frac1N\sum_{k=0}^N&(-1)^k\binom{N}{k}g^kD^N(g^{N+1-k}f)\\
&-\frac{N+1}N g\sum_{k=0}^N(-1)^k\binom{N}{k}g^kD^N(g^{N-k}f)\\
&+\sum_{k=0}^N(-1)^k\binom{N}{k}g^{k+1}D^N(g^{N-k}f)=0.
\end{align*}
Cancel the third term from part of the second term above. We obtain
\begin{align*}
\frac1N\sum_{k=0}^N&(-1)^k\binom{N}{k}g^kD^N(g^{N+1-k}f)\\
&-\frac1N\sum_{k=0}^N(-1)^k\binom{N}{k}g^{k+1}D^N(g^{N-k}f)=0.
\end{align*}
Multiply both sides by $N$ and shift the index of summation in the second term. Then
$$\sum_{k=0}^N(-1)^k\binom{N}{k}g^kD^N(g^{N+1-k}f)+\sum_{k=1}^{N+1}
(-1)^k\binom{N}{k-1}g^kD^N(g^{N+1-k}f)=0.$$
Rewrite this as
\begin{align*}
&D^N(g^{N+1}f)+(-1)^{N+1}g^{N+1}D^N(f)\\
&+\sum_{k=1}^N(-1)^k\left[\binom{N}{k}+\binom{N}{k-1}\right]g^kD^N(g^{N+1-k}f)=0,
\end{align*}
and observe that
$$\binom{N}{k}+\binom{N}{k-1}=\binom{N+1}{k}.$$
We conclude that
$$\sum_{k=0}^{N+1}(-1)^k\binom{N+1}{k}g^kD^N(g^{N+1-k}f)=0.$$
This completes the induction step and finishes the proof of the theorem.
\end{proof}

Once again, we have tried very hard to find a more simple proof for the seemingly
elementary formula above, but we have been unsuccessful. Also, we certainly
realize that the formula may have appeared in the literature before. But we searched 
and it appears to be new!

We restate Theorem~\ref{5} as follows, which will be more directly related to our
spectral analysis for multiplication operators later on.

\begin{cor}
Suppose $N$ is a positive integer, $f$ and $g$ are $N$ times differentiable on the open
interval $I$, and $g$ is non-vanishing on $I$. Then
$$D^N\left(\frac fg\right)=\frac{(-1)^{N}}{g^{N+1}}
\sum_{k=0}^N(-1)^k\binom{N+1}{k}g^kD^N(g^{N-k}f).$$
\label{6}
\end{cor}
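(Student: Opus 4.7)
The plan is to deduce the corollary directly from Theorem~\ref{5} by a purely algebraic rewriting, using the substitution $f\mapsto f/g$ which is legitimate because $g$ is non-vanishing on $I$ and hence $f/g$ is still $N$ times differentiable on $I$.

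First I would apply Theorem~\ref{5} with $N$ replaced by $N+1$. Since $f$ and $g$ are $N$ times differentiable, both sides of that identity make sense (the derivative order there is $(N+1)-1=N$), and we obtain
$$\sum_{k=0}^{N+1}(-1)^k\binom{N+1}{k}g^kD^{N}(g^{N+1-k}f)=0.$$
This identity holds for any pair of $N$ times differentiable functions on $I$, so in particular it holds when $f$ is replaced by $f/g$.

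Next I would separate out the $k=N+1$ term, which contributes $(-1)^{N+1}g^{N+1}D^N(f/g)$ after the substitution. For the remaining terms $k=0,1,\ldots,N$, the crucial simplification is that $g^{N+1-k}\cdot(f/g)=g^{N-k}f$, a pointwise identity that is valid precisely because $g$ never vanishes on $I$. Substituting and isolating the $D^N(f/g)$ term yields
$$(-1)^{N+1}g^{N+1}D^N(f/g)+\sum_{k=0}^{N}(-1)^k\binom{N+1}{k}g^kD^{N}(g^{N-k}f)=0,$$
and dividing by $(-1)^{N+1}g^{N+1}$ gives the desired formula, since $-1/(-1)^{N+1}=(-1)^{N}$.

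There is essentially no obstacle beyond bookkeeping: the only two things to verify carefully are that the shifted identity from Theorem~\ref{5} can indeed be applied to $f/g$ (which uses that $g$ is non-vanishing and $N$ times differentiable on $I$), and that the exponent arithmetic $g^{N+1-k}\cdot g^{-1}=g^{N-k}$ is handled correctly when peeling off the $k=N+1$ term. Once those are in place, the corollary is immediate.
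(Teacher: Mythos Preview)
Your proof is correct and is essentially the same as the paper's. The paper first isolates the $k=N+1$ term in the identity from Theorem~\ref{5} (applied with $N+1$ in place of $N$) to solve for $D^N(f)$, and then substitutes $f\mapsto f/g$; you substitute $f\mapsto f/g$ first and then isolate the $k=N+1$ term. The two orderings are interchangeable and lead to the same computation.
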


\begin{proof}
By Theorem~\ref{5}, we have
$$D^N(f)=-\frac{(-1)^{N+1}}{g^{N+1}}\sum_{k=0}^N(-1)^k\binom{N+1}{k}
g^kD^N(g^{N+1-k}f).$$
Replace $f$ by $f/g$. We obtain the desired result.
\end{proof}

\section{The spectrum of $M_\varphi$}

In this section we determine the spectrum of $M_\varphi: H^2_\beta\to H^2_\beta$
for $\varphi\in\mb$ and $\beta\in\R$. Note that the case of Hardy and Bergman spaces
is well known; see \cite{Z2} for example. Some preliminary work about the spectrum
of multiplication operators on the Dirichlet space can be found in \cite{CH}. 

The key here is the following result about reciprocals of pointwise multipliers. See 
\cite{RS} for related partial results in the case of the Drury-Arveson space.

\begin{prop}
Suppose $\varphi\in\mb$ and $|\varphi(z)|\ge\delta$ for some positive constant
$\delta$ and all $z\in\bn$. Then $1/\varphi$ belongs to $\mb$ as well.
\label{7}
\end{prop}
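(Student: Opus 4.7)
The plan is to split into the trivial case $\beta\le 0$ and the substantive case $\beta>0$. For $\beta\le 0$, Proposition~\ref{3} identifies $\mb$ with $H^\infty$, so the hypothesis $|\varphi(z)|\ge\delta>0$ combined with holomorphicity gives $1/\varphi\in H^\infty=\mb$ immediately. So I would quickly dispose of that case and concentrate on $\beta>0$.

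For $\beta>0$, I would fix a positive integer $N$ with $N>\beta$ and use the characterization in Proposition~\ref{2}(c): to show $1/\varphi\in\mb$ it suffices to show that for every $f\in H^2_\beta$ the holomorphic function $f/\varphi$ (well defined on $\bn$ since $\varphi$ does not vanish there) satisfies $R^N(f/\varphi)\in A^2_{2(N-\beta)-1}$. The key step is to apply Corollary~\ref{6} with $D=R$. This is legitimate because the proof of Theorem~\ref{5}, and hence of Corollary~\ref{6}, uses only the Leibniz rule, and the radial derivative $R=\sum_{i=1}^n z_i\,\partial/\partial z_i$ is a derivation on $H(\bn)$. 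The output of this formal substitution is the pointwise identity
$$R^N\!\left(\frac{f}{\varphi}\right)(z)=\frac{(-1)^N}{\varphi(z)^{N+1}}\sum_{k=0}^N(-1)^k\binom{N+1}{k}\varphi(z)^k\,R^N(\varphi^{N-k}f)(z)$$
valid on $\bn$.

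I would then analyze each summand. Since $\varphi\in\mb$, induction on $j\ge 0$ gives $\varphi^j f\in H^2_\beta$ for every $j$, so by Proposition~\ref{2}(c) we have $R^N(\varphi^{N-k}f)\in A^2_{2(N-\beta)-1}$ for each $k=0,1,\dots,N$. The prefactor $\varphi^k/\varphi^{N+1}$ is bounded on $\bn$ by $\|\varphi\|_\infty^k/\delta^{N+1}$, and multiplication by a bounded function preserves $A^2_{2(N-\beta)-1}$. Summing over $k$ yields $R^N(f/\varphi)\in A^2_{2(N-\beta)-1}$, and a final appeal to Proposition~\ref{2}(c) gives $f/\varphi\in H^2_\beta$, as needed.

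The genuine obstacle was concentrated already in Corollary~\ref{6}: that identity expresses $R^N(f/\varphi)$ entirely in terms of $R^N$ applied to products of the form $\varphi^{N-k}f$, rather than letting derivatives fall on $1/\varphi$, about which the multiplier hypothesis tells us nothing directly. Without such a formula the multiplier property for $1/\varphi$ looks genuinely inaccessible when $\beta>0$, since the usual pointwise bound $|1/\varphi|\le 1/\delta$ no longer suffices to control the relevant fractional derivatives. Once the identity is in hand the proof reduces, as above, to a routine combination of the integral characterization in Proposition~\ref{2} and the lower bound $|\varphi|\ge\delta$.
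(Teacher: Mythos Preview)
Your argument is correct and follows essentially the same approach as the paper: both apply Corollary~\ref{6} with $R$ in place of $D$ to express $R^N(f/\varphi)$ as a bounded holomorphic multiple of a finite sum of terms $R^N(\varphi^{N-k}f)$, each of which lies in $A^2_{2(N-\beta)-1}$ by Proposition~\ref{2}(c) and the multiplier hypothesis. The only cosmetic difference is your case split: the paper treats all real $\beta$ uniformly by simply choosing a positive integer $N>\beta$, so your separate treatment of $\beta\le 0$ via Proposition~\ref{3} is unnecessary (though not incorrect).
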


\begin{proof}
Choose a positive integer $N$ such that $N>\beta$. By Proposition~\ref{2}, a function
$f\in H(\bn)$ belongs to $H^2_\beta$ if and only if $R^N(f)\in A^2_{2(N-\beta)-1}$.
Since $2(N-\beta)-1>-1$, the multiplier algebra of $A^2_{2(N-\beta)-1}$ is $H^\infty$.
 
The radial derivative $R$ obeys the same differentiation rules as the ordinary derivative
$D$. Thus by Corollary~\ref{6}, we have
\begin{equation}
R^N\left(\frac f\varphi\right)=\frac{(-1)^{N}}{\varphi^{N+1}}
\sum_{k=0}^N(-1)^k\binom{N+1}{k}\varphi^kR^N(\varphi^{N-k}f)
\label{eq7}
\end{equation}
for all $f\in H^2_\beta$. Since $\varphi\in\mb$, each function $\varphi^{N-k}f$ belongs
to $H^2_\beta$, or equivalently, $R^N(\varphi^{N-k}f)\in A^2_{2(N-\beta)-1}$. Since
$\varphi$ and $1/\varphi$ are both bounded holomorphic functions on $\bn$, it
follows from (\ref{eq7}) that $R^N(f/\varphi)\in A^2_{2(N-\beta)-1}$, or equivalently,
$f/\varphi\in H^2_\beta$ whenever $f\in H^2_\beta$.
\end{proof}

We can now prove the first main result of the paper.

\begin{thm}
For any real $\beta$ and any $\varphi\in\mb$ the spectrum of
the operator $M_\varphi: H^2_\beta\to H^2_\beta$ is given by
$\sigma(M_\varphi)=\overline{\varphi(\bn)}$, which is the closure of the range of
$\varphi$ in the complex plane.
\label{8}
\end{thm}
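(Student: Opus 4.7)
The plan is to prove the two inclusions separately, using the reproducing kernel structure for one direction and Proposition \ref{7} for the other.

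For the inclusion $\overline{\varphi(\bn)}\subseteq\sigma(M_\varphi)$, I would first recall that $H^2_\beta$ is a reproducing kernel Hilbert space with some kernel $K_z$ (the specific kernels are described in Section 2). Since $\sigma(M_\varphi)$ is closed, it suffices to show $\varphi(\bn)\subseteq\sigma(M_\varphi)$. Fix $z\in\bn$. A direct computation using the defining identity $\langle f,K_z\rangle_\beta=f(z)$ shows that for every $f\in H^2_\beta$,
$$\langle f,M_\varphi^* K_z\rangle_\beta=\langle M_\varphi f,K_z\rangle_\beta=(\varphi f)(z)=\varphi(z)\langle f,K_z\rangle_\beta,$$
so $M_\varphi^* K_z=\overline{\varphi(z)}K_z$. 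Since $K_z\neq 0$, $\overline{\varphi(z)}$ lies in the point spectrum of $M_\varphi^*$, and hence $\varphi(z)\in\sigma(M_\varphi)$.

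For the reverse inclusion, take $\lambda\in\C\setminus\overline{\varphi(\bn)}$. Then there exists $\delta>0$ such that $|\varphi(z)-\lambda|\ge\delta$ for every $z\in\bn$. Since constants are multipliers of $H^2_\beta$, the function $\psi:=\varphi-\lambda$ also lies in $\mb$, and $|\psi(z)|\ge\delta>0$ on $\bn$. Proposition \ref{7} (the hard technical step, whose difficulty motivated all of Section 4) applies directly to $\psi$ and yields $1/\psi\in\mb$. Consequently $M_{1/\psi}$ is a bounded operator on $H^2_\beta$, and since pointwise multiplication is commutative and $(1/\psi)\cdot\psi\equiv 1$, we have
$$M_{1/\psi}\,(M_\varphi-\lambda I)=(M_\varphi-\lambda I)\,M_{1/\psi}=I.$$
Thus $M_\varphi-\lambda I$ is invertible, so $\lambda\notin\sigma(M_\varphi)$.

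The main obstacle has already been dispatched by Proposition \ref{7}: the nontrivial point is that boundedness of $1/\psi$ is \emph{not} enough in the Hardy-Sobolev setting, because membership in $H^2_\beta$ is measured through a fractional derivative. Once that reciprocal result is available, the spectrum computation reduces to the standard reproducing-kernel argument above combined with a one-line inversion of $M_{\varphi-\lambda}$.
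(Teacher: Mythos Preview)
Your proof is correct and follows essentially the same approach as the paper: the reproducing-kernel eigenvector computation for $M_\varphi^*$ gives $\overline{\varphi(\bn)}\subset\sigma(M_\varphi)$, and Proposition~\ref{7} applied to $\lambda-\varphi$ (or $\varphi-\lambda$) yields the reverse inclusion by producing an explicit inverse $M_{1/(\lambda-\varphi)}$. The only cosmetic difference is the sign convention in defining $\psi$.
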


\begin{proof}
It is easy to see that the point-evaluation at any $a\in\bn$ is a bounded linear
functional on $H^2_\beta$ (by Taylor expansion for example). Thus each $H^2_\beta$
is a reproducing kernel Hilbert space. For any $a\in\bn$ let $K(z,a)=K_a(z)$ denote 
the reproducing kernel of $H^2_\beta$ at $a$. Then
\begin{eqnarray*}
M_\varphi^*K_a(z)&=&\langle M_\varphi^*K_a, K_z\rangle\\
&=&\langle K_a, M_\varphi K_z\rangle=\langle K_a, \varphi K_z\rangle\\
&=&\overline{\varphi(a)}\overline{K_z(a)}=\overline{\varphi(a)}K_a(z).
\end{eqnarray*}
This shows that $M_\varphi^*K_a=\overline{\varphi(a)}K_a$, or $\overline{\varphi(a)}$
is an eigenvalue of $M_\varphi^*$. Thus $\overline{\varphi(a)}\in\sigma(M_\varphi^*)$, 
and so $\varphi(a)\in\sigma(M_\varphi)$. Since the spectrum of any bounded linear 
operator is closed, we must have $\overline{\varphi(\bn)}\subset\sigma(M_\varphi)$.

On the other hand, if $\lambda\in\C-\overline{\varphi(\bn)}$, then there exists a
positive number $\delta$ such that $|\lambda-\varphi(z)|\ge\delta$ for all $z\in\bn$.
Since $\varphi\in\mb$, we also have $\lambda-\varphi\in\mb$. By Proposition~\ref{7}, 
the function $\psi=1/(\lambda-\varphi)$ is also a pointwise multiplier of 
$H^2_\beta$. It is clear that 
$$M_\psi(\lambda I-M_\varphi)=(\lambda I-M_\varphi)M_\psi=I.$$
Thus $\lambda I-M_\varphi$ is invertible, or $\lambda\not\in\sigma(M_\varphi)$.
Combining this with what we proved in the previous paragraph, we conclude that
$\sigma(M_\varphi)=\overline{\varphi(\bn)}$.
\end{proof}

The theorem above is somewhat surprising, because it shows that the spectral
radius of the operator $M_\varphi: H^2_\beta\to H^2_\beta$ is always equal to
$\|\varphi\|_\infty$, while the norm of $M_\varphi$ can be strictly larger than
$\|\varphi\|_\infty$! For example, the norm of the operator $M_z$ on the classical
Dirichlet space on the unit disk is clearly greater than one.

More generally, in dimension one, the operator $M_z: H^2_\beta\to H^2_\beta$ is
a weighted shift whose spectrum is always the closed unit disk, although the weight
sequence can vary greatly as $\beta$ changes over $\R$.

\section{The Fredholm theory for $M_\varphi$}

In this section we compute the essential spectrum of $M_\varphi: H^2_\beta\to 
H^2_\beta$ when $\varphi$ is a multiplier of $H^2_\beta$. 

Recall that a bounded linear operator $T$ on $H^2_\beta$ is called a Fredholm operator 
if it has closed range, has finite dimensional kernel, and has finite dimensional co-kernel. 
When $T$ is Fredholm, the integer
$$\Ind(T)=\dim\ker(T)-\dim\ker(T^*)$$
is called the Fredholm index of $T$. 

The essential spectrum of a bounded linear operator
$T$ on $H^2_\beta$, denoted by $\sigma_e(T)$, is the set of complex numbers
$\lambda$ such that $\lambda I-T$ is not Fredholm. See \cite{D, Z2} for basic
information about Fredholm operators, the Fredholm index, and the essential spectrum. 

\begin{lem}
Let $T$ be a bounded linear operator on $H^2_\beta$. Then the following conditions
are equivalent.
\begin{enumerate}
\item[(a)] $T$ is not Fredholm.
\item[(b)] There exists a sequence $\{f_n\}$ of unit vectors in $H^2_\beta$ such that 
$f_n\to0$ weakly and $\|Tf_n\|\to0$ or $\|T^*f_n\|\to0$.
\end{enumerate}
\label{9}
\end{lem}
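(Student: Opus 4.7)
The plan is to prove the two implications separately by invoking the standard Hilbert-space characterization of Fredholm operators: $T$ is Fredholm if and only if $\ker T$ and $\ker T^*$ are both finite-dimensional and the range of $T$ is closed. I will repeatedly use the auxiliary fact that closedness of the range of $T$ together with $\dim\ker T<\infty$ is equivalent to the existence of $c>0$ with $\|Tx\|\ge c\|x\|$ for all $x\in(\ker T)^\perp$, as well as the symmetry that the range of $T$ is closed if and only if the range of $T^*$ is closed.

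For (b) $\Rightarrow$ (a) I would assume $\{f_n\}$ is a weakly null sequence of unit vectors with $\|Tf_n\|\to 0$ (the case $\|T^*f_n\|\to 0$ being handled by swapping $T$ and $T^*$) and argue by contradiction. If $T$ were Fredholm, the orthogonal projection $P$ onto $\ker T$ would have finite rank and hence be compact, so it would turn the weakly null sequence $\{f_n\}$ into the norm-null sequence $\{Pf_n\}$. Then $\|(I-P)f_n\|\to 1$, and since $(I-P)f_n\in(\ker T)^\perp$ the lower bound would give $\|Tf_n\|=\|T(I-P)f_n\|\ge c\|(I-P)f_n\|\to c>0$, contradicting $\|Tf_n\|\to 0$.

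For (a) $\Rightarrow$ (b) I will split into three subcases covering the three ways $T$ can fail to be Fredholm. If $\dim\ker T=\infty$, I would take any orthonormal sequence in $\ker T$; it is weakly null by Bessel's inequality and is annihilated by $T$. If $\dim\ker T^*=\infty$, the symmetric choice produces a weakly null sequence annihilated by $T^*$. The remaining case is when both kernels are finite-dimensional but the range of $T$ fails to be closed; then $T$ is not bounded below on $(\ker T)^\perp$, so there exist unit vectors $y_n\in(\ker T)^\perp$ with $\|Ty_n\|\to 0$. I would extract a weakly convergent subsequence $y_{n_k}\rightharpoonup y$, observe that $y\in(\ker T)^\perp$ by weak closedness and that $Ty_{n_k}\rightharpoonup Ty$ by boundedness of $T$, and then conclude from $\|Ty_{n_k}\|\to 0$ that $Ty=0$, so $y\in\ker T\cap(\ker T)^\perp=\{0\}$ and the subsequence itself is the desired weakly null witness.

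The main obstacle is this last case: one must upgrade a merely bounded witness to non-closedness of the range into a weakly null such sequence, and this requires the weak-compactness extraction together with the double-incidence argument $y\in\ker T\cap(\ker T)^\perp$. All other steps reduce to textbook Hilbert-space facts, namely that finite-rank operators are compact, compact operators carry weak convergence to norm convergence, and orthonormal sequences are weakly null.
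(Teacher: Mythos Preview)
Your argument is correct in every step: the $(b)\Rightarrow(a)$ direction via the finite-rank projection onto $\ker T$ and the lower bound on $(\ker T)^\perp$ is clean, and in the $(a)\Rightarrow(b)$ direction the three-case split is the right organization, with the weak-limit extraction in the third case correctly forcing $y\in\ker T\cap(\ker T)^\perp=\{0\}$.

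As for comparison with the paper: there is essentially nothing to compare. The paper does not prove this lemma; it simply declares the result well known and cites \cite{CH}. So your write-up is not an alternative route but rather a self-contained proof supplied where the authors chose to defer to the literature. If anything, your version is more informative for a reader, since it makes explicit which standard Hilbert-space ingredients are being used (compactness of finite-rank projections, weak sequential compactness of the unit ball, weak-weak continuity of bounded operators, and the closed-range lower-bound criterion).
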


\begin{proof}
This is well known. See \cite{CH} for example.
\end{proof}

Our determination of the essential spectrum for $M_\varphi: H^2_\beta\to H^2_\beta$
depends on several different techniques that are valid in different situations. We begin 
with the high dimensional case.

\begin{lem}
Suppose $n>1$, $\beta$ is real, and $\varphi\in\mb$. Then
$$\sigma_e(M_\varphi)=\bigcap_{0<r<1}\overline{\varphi(\bn-r\bn)}
=\overline{\varphi(\bn)}=\sigma(M_\varphi).$$
\label{10}
\end{lem}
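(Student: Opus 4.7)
The plan is to combine Theorem~\ref{8} with two observations, each hinging on the hypothesis $n>1$. From Theorem~\ref{8} one has $\sigma_e(M_\varphi)\subseteq\sigma(M_\varphi)=\overline{\varphi(\bn)}$, and the inclusion $\bigcap_{0<r<1}\overline{\varphi(\bn-r\bn)}\subseteq\overline{\varphi(\bn)}$ is automatic. So the task reduces to proving
$$\overline{\varphi(\bn)}\subseteq\bigcap_{0<r<1}\overline{\varphi(\bn-r\bn)}\quad\text{and}\quad\overline{\varphi(\bn)}\subseteq\sigma_e(M_\varphi),$$
after which the chain of equalities claimed by the lemma is immediate. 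The constant case is trivial, so assume throughout that $\varphi$ is nonconstant.

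For the first inclusion, fix $w\in\varphi(\bn)$ and consider the fiber $V_w:=\varphi^{-1}(w)$. Since $\varphi-w$ is a nonzero holomorphic function on $\bn$, $V_w$ is a nonempty complex analytic subvariety of pure codimension one, hence of pure complex dimension $n-1\geq1$. The key geometric input is that a positive-dimensional complex analytic subvariety of $\cn$ cannot be compact: restricting any coordinate function to such a subvariety would produce a nonconstant bounded holomorphic function on a compact analytic variety, contradicting the maximum principle. Consequently $V_w$ is not relatively compact in $\bn$ and must meet $\bn-r\bn$ for every $r\in(0,1)$, placing $w$ in $\varphi(\bn-r\bn)$ itself (not merely in its closure).

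For the essential spectrum inclusion, since $\sigma_e(M_\varphi)$ is closed it suffices to show $\varphi(\bn)\subseteq\sigma_e(M_\varphi)$. Fix $\lambda\in\varphi(\bn)$. The fiber $V_\lambda$ from the previous paragraph is positive-dimensional and therefore infinite; pick countably many distinct points $z_1,z_2,\ldots$ in $V_\lambda$. The computation from the proof of Theorem~\ref{8} yields $M_\varphi^*K_{z_k}=\overline{\lambda}K_{z_k}$ for each $k$, so $\{K_{z_k}\}\subseteq\ker(M_\varphi-\lambda I)^*$. Reproducing kernels at distinct points are linearly independent in $H^2_\beta$ (Lagrange interpolation polynomials lie in $H^2_\beta$ and separate finitely many points), so $\ker(M_\varphi-\lambda I)^*$ is infinite-dimensional. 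This forces $M_\varphi-\lambda I$ to have infinite-dimensional cokernel and fail to be Fredholm, giving $\lambda\in\sigma_e(M_\varphi)$. The only nontrivial technical point is the noncompactness of positive-dimensional analytic subvarieties of $\bn$, which is exactly where the hypothesis $n>1$ is used essentially; the rest is an elementary reproducing kernel manipulation combined with Theorem~\ref{8}.
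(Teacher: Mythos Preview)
Your proof is correct. For the inclusion $\varphi(\bn)\subset\sigma_e(M_\varphi)$ you use exactly the same argument as the paper: the zero variety of $\lambda-\varphi$ is infinite when $n>1$, so the reproducing kernels at those points span an infinite-dimensional subspace of $\ker(M_\varphi-\lambda I)^*$.

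Where you diverge from the paper is in establishing $\overline{\varphi(\bn)}\subset\bigcap_{0<r<1}\overline{\varphi(\bn-r\bn)}$. You argue directly that each fiber $\varphi^{-1}(w)$, being a pure $(n-1)$-dimensional analytic set with $n-1\ge1$, cannot be compact (maximum principle) and therefore must escape every $r\bn$; this puts $w$ itself into $\varphi(\bn-r\bn)$ for every $r$. The paper instead argues contrapositively at the level of the spectrum: if $\lambda$ lies outside the intersection, then $1/(\lambda-\varphi)$ is holomorphic and bounded on some shell $\bn-r\bn$, and the Hartogs extension theorem pushes it to a bounded holomorphic function on all of $\bn$; the identity theorem then forces $\lambda-\varphi$ to be zero-free and bounded below, so Proposition~\ref{7} makes $M_{\lambda-\varphi}$ invertible and $\lambda\notin\sigma(M_\varphi)$. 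Your route is more elementary---it needs only the maximum principle rather than Hartogs---and yields the purely function-theoretic statement $\varphi(\bn)=\varphi(\bn-r\bn)$ for every $r<1$, which is slightly sharper than what the paper's argument gives. The paper's route, on the other hand, is self-contained (it does not invoke Theorem~\ref{8}) and makes the invertibility of $M_{\lambda-\varphi}$ explicit, which feeds directly into the later index computation.
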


\begin{proof}
If $\lambda\in\varphi(\bn)$, then the function $\lambda-\varphi$ has a zero inside
$\bn$. Since $n>1$, the zero set of $\lambda-\varphi$ cannot be isolated. Thus 
$\lambda-\varphi$ has inifitely many distinct zeros inside $\bn$. If $\lambda-\varphi(a_k)
=0$ for infinitely many distinct points $a_k$ in $\bn$, then by the proof of 
Theorem~\ref{8}, $\ker(M^*_{\lambda-\varphi})$ contains all the kernel functions 
$K_{a_k}$, which span an infinite dimensional subspace in $H^2_\beta$. Thus $M^*_{\lambda-\varphi}$ is not Fredholm. This shows that $\varphi(\bn)\subset\sigma_e(M_\varphi)$. Taking the
closure, we obtain
$$\bigcap_{0<r<1}\overline{\varphi(\bn-r\bn)}\subset\overline{\varphi(\bn)}
\subset\sigma_e(M_\varphi)\subset\sigma(M_\varphi).$$

On the other hand, if
$$\lambda\not\in\bigcap_{0<r<1}\overline{\varphi(\bn-r\bn)},$$
then there exist $r\in(0,1)$ and $\delta>0$ such that $|\lambda-\varphi(z)|\ge\delta$
for all $z\in\bn-r\bn$. In particular, the function $\psi=1/(\lambda-\varphi)$ is
holomorphic on the shell $\bn-r\bn$. Since $n>1$, it follows from the Hartogs
extension theorem (see \cite{K}) and the maximum modulus principle that $\psi$ can 
be extended to a bounded holomorphic function on the whole unit ball $\bn$. Now 
the function $\psi(z)(\lambda-\varphi(z))$ is holomorphic on $\bn$ and equals $1$ on 
the shell $\bn-r\bn$. By the identity theorem, we have
$$\psi(z)(\lambda-\varphi(z))=1,\qquad z\in\bn.$$
This shows that $\lambda-\varphi$ is non-vanishing on $\bn$ and
$$\psi(z)=\frac1{\lambda-\varphi(z)},\qquad z\in\bn.$$
Since $\lambda-\varphi$ is bounded below on the shell $\bn-r\bn$ and is non-vanishing
on $r\bn$, it follows that $\lambda-\varphi$ is bounded below on the whole unit ball.
By Proposition~\ref{7}, the function $\psi$ is also a multiplier of $H^2_\beta$. Since
$$M_{\lambda-\varphi}M_\psi=M_\psi M_{\lambda-\varphi}=I,$$
we conclude that $\lambda I-M_\varphi=M_{\lambda-\varphi}$ is invertible, or
$\lambda\not\in\sigma(M_\varphi)$. This shows
$$\sigma(M_\varphi)\subset\bigcap_{0<r<1}\overline{\varphi(\bn-r\bn)}$$
and completes the proof of the lemma.
\end{proof}

It is easy to see that the result above does not hold for $n=1$. In fact, when $n=1$ and
$\varphi$ is continuous up to the boundary, then it is clear that, in general,
$$\overline{\varphi(\D)}=\varphi(\overline\D)\not=\varphi(\T)
=\bigcap_{0<r<1}\overline{\varphi(\D-r\D)},$$
where $\D=\B_1$ is the open unit disk and $\T$ is the unit circle in the complex plane.

\begin{lem}
Suppose $n=1$, $\beta\le1/2$, and $\varphi\in\mb$. Then
$$\sigma_e(M_\varphi)=\bigcap_{0<r<1}\overline{\varphi(\D-r\D)}.$$
\label{11}
\end{lem}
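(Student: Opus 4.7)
The plan is to prove both inclusions in the claimed equality.

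For $\bigcap_{0<r<1}\overline{\varphi(\D-r\D)}\subseteq\sigma_e(M_\varphi)$, I would pick $\lambda$ in the intersection and select a sequence $z_k\in\D$ with $|z_k|\to 1$ and $\varphi(z_k)\to\lambda$. Let $k_{z_k}=K_{z_k}/\|K_{z_k}\|$ be the normalized reproducing kernel. In the orthonormal basis coming from the monomials, one computes $\|K_w\|^2=1+\sum_{k=1}^\infty|w|^{2k}/k^{2\beta}$, which blows up as $|w|\to 1$ precisely because $\sum k^{-2\beta}$ diverges, i.e., because $\beta\le 1/2$. Since polynomials are dense in $H^2_\beta$ and $\langle f,k_w\rangle=f(w)/\|K_w\|$, this forces $k_{z_k}\to 0$ weakly. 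From the eigenvalue relation $M_\varphi^*K_w=\overline{\varphi(w)}K_w$ (established in the proof of Theorem~\ref{8}) we get $\|(\lambda I-M_\varphi)^*k_{z_k}\|=|\lambda-\varphi(z_k)|\to 0$, and Lemma~\ref{9} delivers $\lambda\in\sigma_e(M_\varphi)$.

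For the reverse inclusion, if $\lambda$ is not in the intersection then I find $r\in(0,1)$ and $\delta>0$ with $|\lambda-\varphi|\ge\delta$ on $\D-r\D$. By the identity theorem, $\lambda-\varphi$ has only finitely many zeros $z_1,\ldots,z_l\in r\D$, with multiplicities $m_1,\ldots,m_l$. I factor $\lambda-\varphi=p\cdot\psi$ where $p(z)=\prod_j(z-z_j)^{m_j}$ and $\psi=(\lambda-\varphi)/p$; continuity of $\psi$ on $\overline{r\D}$ combined with the bound on $\D-r\D$ renders $\psi$ holomorphic, bounded, and bounded below by some $\delta'>0$ on all of $\D$. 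The plan is then to show that $M_\psi$ is invertible and $M_p$ is Fredholm, so that $\lambda I-M_\varphi=M_pM_\psi$ is Fredholm and $\lambda\notin\sigma_e(M_\varphi)$.

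Both pieces depend on the following \emph{divisibility claim}: if $G\in H^2_\beta$ and $G(z_0)=0$ for some $z_0\in\D$, then $G/(z-z_0)\in H^2_\beta$. Granting it, iteration shows $(\lambda-\varphi)f/p=\psi f\in H^2_\beta$ whenever $f\in H^2_\beta$, so $\psi\in\mb$; Proposition~\ref{7} then puts $1/\psi$ in $\mb$ and $M_\psi$ becomes invertible. The same claim identifies the range of each $M_{z-z_j}$ with the codimension-one closed subspace $\{G\in H^2_\beta:G(z_j)=0\}$, whence $M_{z-z_j}$ is Fredholm of index $-1$ and $M_p=\prod_j M_{z-z_j}^{m_j}$ is Fredholm of index $-\sum_j m_j$. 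I would prove the divisibility claim by splitting into two cases. For $\beta<0$, Proposition~\ref{3} gives $\mb=H^\infty$ and $H^2_\beta$ is a standard weighted Bergman space, on which the claim follows from a direct $L^2$ estimate (the quotient is bounded near $z_0$, while off any fixed neighborhood of $z_0$ one has $|G/(z-z_0)|\le|G|/\epsilon$). For $0\le\beta\le 1/2$ I use Proposition~\ref{2} with $N=1$, which characterizes $H^2_\beta$ via $\int_\D|zG'(z)|^2(1-|z|^2)^{1-2\beta}\,dA<\infty$. Writing $H=G/(z-z_0)$ and differentiating gives $H'(z)=G'(z)/(z-z_0)-G(z)/(z-z_0)^2$; near $z_0$ the numerator vanishes to order at least two (since $G(z_0)=0$), making $H'$ bounded there, while off any fixed neighborhood of $z_0$ one has $|H'|\le C(|G'|+|G|)$. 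Lemma~\ref{1} together with the Hardy-Littlewood-type embedding $H^2_\beta\hookrightarrow A^2_{1-2\beta}$ then yield the required integrability against the weight $(1-|z|^2)^{1-2\beta}$.

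The main obstacle is the divisibility claim in the intermediate range $0<\beta\le 1/2$, where $\mb$ is strictly smaller than $H^\infty$ and $H^2_\beta$ is a genuine generalized Bergman space; once that is settled, the remainder is a routine assembly of Proposition~\ref{7}, Lemma~\ref{9}, and standard Fredholm theory.
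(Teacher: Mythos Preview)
Your argument is correct and tracks the paper's proof closely: the inclusion $\supseteq$ via normalized reproducing kernels (with $\beta\le 1/2$ used precisely to force $\|K_w\|\to\infty$ and hence $k_w\to 0$ weakly), and the reverse via the factorization $\lambda-\varphi=\psi p$ with $M_\psi$ invertible and $M_p$ Fredholm.

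The one substantive difference is how you establish $\psi\in\mb$. The paper applies the differentiation formula of Corollary~\ref{6} with $g=p$, writing $R^N(\psi f)$ as a combination of terms $p^{-(N+1)}p^k R^N\bigl(p^{N-k}(\lambda-\varphi)f\bigr)$; since each $p^{N-k}(\lambda-\varphi)f\in H^2_\beta$ and the $A^2_{2(N-\beta)-1}$ condition is determined by the behavior near $\partial\D$ (where $1/p$ is bounded), one gets $R^N(\psi f)\in A^2_{2(N-\beta)-1}$. You instead prove the divisibility claim directly with $N=1$ and iterate to peel off the linear factors of $p$. Both routes are valid; the paper's has the advantage of being uniform in $\beta$ (the identical argument is reused in Lemma~\ref{14} for $\beta>1/2$), while yours is more self-contained but tied to the range $\beta\le 1/2$ through the choice $N=1$. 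Note that the paper also needs your divisibility claim to identify the range of $M_{z-a_k}$ with $\{f:f(a_k)=0\}$; it simply asserts this, whereas you spell it out.
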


\begin{proof}
Let $K_a(z)=K(z,a)$ be the reproducing kernel of $H^2_\beta$ at $a\in\bn$. Let
$k_a=K_a/\|K_a\|$ be the normalized reproducing kernel of $H^2_\beta$ at $a\in\D$.
When $n=1$, it is easy to see that $k_a\to0$ weakly as $|a|\to1^-$ if and only if 
$\beta\le 1/2$.

First assume that
$$\lambda\in\bigcap_{0<r<1}\overline{\varphi(\D-r\D)}.$$
Then we can find a sequence $\{a_k\}\subset\D$ such that $|a_k|\to1$ and
$\varphi(a_k)\to\lambda$ as $k\to\infty$. Let 
$$T=\lambda I-M_\varphi=M_{\lambda-\varphi}: H^2_\beta\to H^2_\beta.$$
It follows  from the proof of Theorem~\ref{8} that
$$T^*k_{a_k}=(\overline\lambda-\overline{\varphi(a_k)})k_{a_k},\qquad k\ge1.$$
Therefore,
$$\lim_{k\to\infty}\|T^*k_{a_k}\|=\lim_{k\to\infty}|\varphi(a_k)-\lambda|=0.$$
By Lemma~\ref{9}, the operator $T$ is not Fredholm, or 
$\lambda\in\sigma_e(M_\varphi)$.

On the other hand, if
$$\lambda\not\in\bigcap_{0<r<1}\overline{\varphi(\D-r\D)},$$
then there exist $r\in(0,1)$ and $\delta>0$ such that $|\lambda-\varphi(z)|\ge\delta$
for all $z\in\D-r\D$. Let $a_1,\cdots, a_N$ denote the zeros of $\lambda-\varphi$ in 
$|z|\le r$, with multiple zeros repeated according to their multiplicities. Then
$$\lambda-\varphi(z)=\psi(z)p(z),$$
where
$$p(z)=(z-a_1)\cdots(z-a_N)$$
and $\psi$ is an invertible element of $H^\infty(\D)$. It follows from the proof of
Proposition~\ref{7} that $\psi\in\mb$ as well (which then also implies that 
$1/\psi\in\mb$), because the estimates there only involve membership in the
Bergman spaces $A^2_t$ with $t>-1$, and such membership of non-vanishing 
functions is determined by the behavior of the functions near the boundary. 

For each $k$ the operator $M_{z-a_k}$ is Fredholm on $H^2_\beta$. In fact, the range
of $M_{z-a_k}$ is the closed subspace 
$$I_k=(z-a_k)H^2_\beta=\{f\in H^2_\beta: f(a_k)=0\},$$
the kernel of $M_{z-a_k}$ is trivial (the operator is one-to-one), and the co-kernel of 
$M_{z-a_k}$ is equal to the one dimensional space spanned by $K_{a_k}$. Since 
$M_\psi$ is invertible on $H^2_\beta$ and $M_p=M_{z-a_1}\cdots M_{z-a_N}$ is 
Fredholm (the product of Fredholm operators is still Fredholm), we conclude that
$$\lambda I-M_\varphi=M_{\lambda-\varphi}=M_\psi M_p$$
is Fredholm. This completes the proof of the lemma.
\end{proof}

It remains for us to tackle the case $n=1$ and $\beta>1/2$. We need to come up with
a new proof, because for $\beta>1/2$, the normalized reproducing kernels $k_a$ no 
longer converge to $0$ weakly as $|a|\to1^-$.  Our new proof for this case will
be based on the notion of peak functions.

Fix some $\zeta\in\T$ and consider the functions
$$f_k(z)=\left(\frac{1+\overline\zeta z}2\right)^k,\qquad k=1,2,3,\cdots.$$
The function $f(z)=(1+\overline\zeta z)/2$ is traditionally called the peak function at
the boundary point $\zeta$.

\begin{lem}
There exists a positive constant $c$ such that 
$$\|f_k\|^2\ge c(k+1)^{2\beta-1}$$ 
for all $k\ge1$, where the norm is taken in $H^2_\beta$.
\label{12}
\end{lem}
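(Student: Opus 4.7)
The plan is to expand $f_k$ into its Taylor series, apply an explicit coefficient-wise formula for the $H^2_\beta$-norm in dimension one, and then keep only the central binomial term. Since $\{z^j\}$ is orthonormal in $H^2(\D)$ and $R^\beta$ acts diagonally on Taylor expansions, the definition of the $H^2_\beta$ inner product gives
$$\|f\|_\beta^2 = |a_0|^2 + \sum_{j=1}^\infty j^{2\beta}|a_j|^2$$
for any $f(z)=\sum_{j\geq 0}a_j z^j$ in $H(\D)$.

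By the binomial theorem $f_k(z)=2^{-k}\sum_{j=0}^k\binom{k}{j}\overline\zeta^{\,j}z^j$, and because $|\zeta|=1$ the coefficients have moduli $2^{-k}\binom{k}{j}$. Setting $m=\lfloor k/2\rfloor$ and dropping every term but the one at $j=m$ (valid since all summands are nonnegative), I obtain
$$\|f_k\|_\beta^2 = \frac{1}{4^k}\Biggl[1+\sum_{j=1}^k j^{2\beta}\binom{k}{j}^2\Biggr] \geq \frac{m^{2\beta}}{4^k}\binom{k}{m}^2.$$

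To finish, two elementary estimates suffice. Since $m\asymp k+1$ uniformly in $k\geq 1$, we have $m^{2\beta}\geq c_\beta(k+1)^{2\beta}$ for a constant depending only on $\beta$, irrespective of sign. By the standard Stirling asymptotic for the central binomial coefficient, $\binom{k}{m}\geq c_0\cdot 2^k/\sqrt{k+1}$ for an absolute constant $c_0>0$ and all $k\geq 1$, and so $\binom{k}{m}^2\geq c_0^2\cdot 4^k/(k+1)$. Combining these two bounds yields $\|f_k\|_\beta^2\geq c(k+1)^{2\beta-1}$, as claimed. There is no serious obstacle in the argument; the only point worth noting is that the single central term already produces the full exponent $2\beta-1$, because the peak height of $\binom{k}{j}^2$ at $j=k/2$ exactly compensates for the $1/\sqrt{k}$ loss from the width of the binomial distribution, and the computation is uniform in the sign of $\beta$.
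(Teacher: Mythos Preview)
Your argument is correct in substance, and it is more direct than the paper's. The paper does not use the explicit coefficient formula
\[
\|f\|_\beta^2=|a_0|^2+\sum_{j\ge1}j^{2\beta}|a_j|^2,
\]
but instead passes through the integral characterization of Proposition~\ref{2}: it estimates $I_k=\int_\D|f_k|^2\,dA_{2(N-\beta)-1}$ by expanding binomially and using $\int_\D|z|^{2j}\,dA_{2(N-\beta)-1}\sim(j+1)^{-2(N-\beta)}$, bounds $\sum_j\binom{k}{j}^2\ge 4^k/(k+1)$ via Cauchy--Schwarz, and then relates $\|f_k\|^2$ to $(k+1)^{2N}I_{k-N}$ since $R^Nf_k$ is essentially a constant multiple of $k^Nf_{k-N}$. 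Your route avoids this detour entirely: once the coefficient formula is written down, a single central term already gives the bound. The paper's approach has the advantage of fitting into the machinery (Proposition~\ref{2}) used elsewhere in the section, while yours is more self-contained.

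One small correction: with $m=\lfloor k/2\rfloor$ you have $m=0$ when $k=1$, so the inequality $m^{2\beta}\ge c_\beta(k+1)^{2\beta}$ fails there (and the term you keep is the constant term, not a term weighted by $j^{2\beta}$). The cleanest fix is to take $m=\lceil k/2\rceil$, or simply to run your estimate for $k\ge2$ and absorb $k=1$ into the constant, since $\|f_1\|_\beta^2>0$.
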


\begin{proof}
Let $N$ be the smallest positive integer greater than $\beta$. Let us first consider 
the integrals
$$I_k=\ind|f_k(z)|^2\,dA_{2(N-\beta)-1}(z),\qquad k\ge1.$$
By the binomial formula and Stirling's formula,
\begin{eqnarray*}
I_k&=&\frac1{4^k}\sum_{j=0}^k\binom{k}{j}^2\ind|z|^{2j}(1-|z|^2)^{2(N-\beta)-1}\,dA(z)\\
&\sim&\frac1{4^k}\sum_{j=0}^k\binom{k}{j}^2\int_0^1r^j(1-r)^{2(N-\beta)-1}\,dr\\
&\sim&\frac1{4^k}\sum_{j=0}^k\binom{k}{j}^2\frac1{(j+1)^{2(N-\beta)}}\\
&\ge&\frac1{4^k(k+1)^{2(N-\beta)}}\sum_{j=0}^k\binom{k}{j}^2.
\end{eqnarray*}
By Cauchy-Schwarz inequality,
$$[2^k]^2=\left[\sum_{j=0}^k\binom{k}{j}\right]^2\le(k+1)\sum_{j=0}^k\binom{k}{j}^2.$$
It follows that
$$\sum_{j=0}^k\binom{k}{j}^2\ge\frac{4^k}{k+1},\qquad k\ge1.$$
Therefore, there exists a positive constant $c$, independent of $k$, such that
$$I_k\ge\frac c{(k+1)^{2(N-\beta)+1}}$$
for all $k\ge1$. If $k>N$, then by Proposition~\ref{2},
\begin{eqnarray*}
\|f_k\|^2&\sim&|f_k(0)|^2+\ind|R^Nf_k(z)|^2\,dA_{2(N-\beta)-1}(z)\\
&=&\frac1{4^k}+\ind|R^Nf_k(z)|^2\,dA_{2(N-\beta)-1}(z)\\
&\sim&\frac1{4^k}+(k+1)^{2N}I_{k-N}.
\end{eqnarray*}
Combining this with our earlier estimates for $I_k$, we obtain another positive
constant $c$ such that
$$\|f_k\|^2\ge\frac{c(k+1)^{2N}}{(k+1)^{2(N-\beta)+1}}=c(k+1)^{2\beta-1}$$
for all $k\ge1$.
\end{proof}

\begin{cor}
Let $g_k=f_k/\|f_k\|$ for $k\ge1$. Then $g_k\to0$ weakly in $H^2_\beta$ as 
$k\to\infty$.
\label{13}
\end{cor}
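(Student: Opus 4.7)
The plan is to verify weak convergence by testing against a dense subset of $H^2_\beta$, namely the linear span of the reproducing kernels $\{K_a: a\in\D\}$. Since $\|g_k\|_\beta=1$ by construction, the sequence is norm-bounded, so it suffices to show that $\langle g_k, K_a\rangle_\beta\to 0$ for every fixed $a\in\D$. Density of the span of the $K_a$ is automatic: any $h\in H^2_\beta$ orthogonal to all $K_a$ satisfies $h(a)=\langle h, K_a\rangle_\beta=0$ for every $a\in\D$, hence $h\equiv 0$.

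By the reproducing property, $\langle g_k, K_a\rangle_\beta = g_k(a) = f_k(a)/\|f_k\|_\beta$. The numerator is easy to control: setting $\rho_a = |(1+\overline\zeta a)/2|$, the inequality $|1+\overline\zeta a|\le 1+|a|<2$ for $a\in\D$ gives $\rho_a<1$, so
\[
|f_k(a)| = \rho_a^{\,k},
\]
which decays exponentially in $k$. The denominator is controlled by Lemma~\ref{12}, which gives $\|f_k\|_\beta \ge \sqrt{c}\,(k+1)^{\beta-1/2}$, a lower bound that is at worst polynomial in $k$. Combining these,
\[
|g_k(a)| \le \frac{\rho_a^{\,k}}{\sqrt{c}\,(k+1)^{\beta-1/2}} \longrightarrow 0,
\]
since exponential decay dominates any polynomial factor regardless of the sign of $\beta-1/2$. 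Thus $\langle g_k, K_a\rangle_\beta\to 0$ for each $a\in\D$.

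Finally, fix any $h\in H^2_\beta$ and $\varepsilon>0$. By density, choose a finite linear combination $h_\varepsilon = \sum_{j} c_j K_{a_j}$ with $\|h-h_\varepsilon\|_\beta < \varepsilon$. Then
\[
|\langle g_k, h\rangle_\beta| \le |\langle g_k, h_\varepsilon\rangle_\beta| + \|g_k\|_\beta\,\|h-h_\varepsilon\|_\beta \le \sum_j |c_j|\,|g_k(a_j)| + \varepsilon,
\]
and since the first term tends to $0$ as $k\to\infty$, we conclude $\limsup_k |\langle g_k, h\rangle_\beta|\le \varepsilon$. Letting $\varepsilon\to 0$ gives weak convergence $g_k\rightharpoonup 0$. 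There is no real obstacle here; the whole argument hinges on the lower bound of Lemma~\ref{12}, which has already been established, together with the trivial but crucial fact that the peak function has modulus strictly less than $1$ on the open disk.
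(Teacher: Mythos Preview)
Your proof is correct and follows essentially the same approach as the paper: both argue that the unit vectors $g_k$ converge to $0$ pointwise on $\D$ (via Lemma~\ref{12} and the fact that the peak function has modulus strictly less than $1$ on $\D$), then invoke the density of the span of reproducing kernels to upgrade pointwise convergence to weak convergence. You simply spell out the details more explicitly than the paper does.
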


\begin{proof}
Each $g_k$ is a unit vector, and it follows from Lemma~\ref{12} above that $g_k(z)
\to0$ pointwise in $\D$ as $k\to\infty$. Thus for every $a\in\D$ we have
$\langle g_k, K_a\rangle\to0$ as $k\to\infty$. Since the set of finite linear combinations 
of kernel functions is dense in $H^2_\beta$, we conclude that for every $f\in H^2_\beta$
we have $\langle g_k,f\rangle\to0$ as $k\to\infty$. Consequently, $g_k\to0$ weakly in 
$H^2_\beta$ as $k\to\infty$.
\end{proof}

We are now ready to finish the last case in the determination of essential spectrum for
the multiplication operators $M_\varphi$ on $H^2_\beta$.

\begin{lem}
Suppose $n=1$, $\beta>1/2$, and $\varphi\in\mb$. Then
$$\sigma_e(M_\varphi)=\bigcap_{0<r<1}\overline{\varphi(\D-r\D)}.$$
\label{14}
\end{lem}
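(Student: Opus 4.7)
The plan is to mirror Lemma~\ref{11}, substituting the peak functions of Corollary~\ref{13} for reproducing kernels in the weak-convergence step. The special feature of the range $n=1$, $\beta>1/2$ is that by Proposition~\ref{4} every multiplier of $H^2_\beta$ extends continuously to $\overline{\D}$ and $\mb=H^2_\beta$ coincide as sets. The closed-graph theorem then supplies a constant $C$ with $\|M_f\|\le C\|f\|_\beta$ for all $f\in H^2_\beta$, and in particular $\bigcap_{0<r<1}\overline{\varphi(\D-r\D)}=\varphi(\T)$.

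The inclusion $\sigma_e(M_\varphi)\subseteq\varphi(\T)$ is essentially identical to Lemma~\ref{11}: continuity of $\varphi$ confines the zeros of $\lambda-\varphi$ (for $\lambda\notin\varphi(\T)$) to a compact subset of $\D$, so there are only finitely many such zeros $a_1,\dots,a_N$; factor $\lambda-\varphi=\psi\cdot(z-a_1)\cdots(z-a_N)$ with $\psi$ invertible in $H^\infty(\D)$ and lying in $\mb$ by the reasoning in the proof of Proposition~\ref{7}, and the product representation shows that $M_{\lambda-\varphi}$ is Fredholm.

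For the reverse inclusion fix $\zeta\in\T$, set $\lambda=\varphi(\zeta)$ and let $g_k=f_k/\|f_k\|_\beta$. By Lemma~\ref{9} and Corollary~\ref{13} it suffices to prove $\|(\varphi-\lambda)g_k\|_\beta\to0$. After rotating we may take $\zeta=1$, and the heart of the matter is the single estimate
\[
\|(z-1)g_k\|_\beta\longrightarrow 0\quad\text{as }k\to\infty.
\]
From the algebraic identity $(z-1)(1+z)^k=(1+z)^{k+1}-2(1+z)^k$ one reads off $(z-1)f_k=2(f_{k+1}-f_k)$; Pascal's rule then gives the Taylor coefficients $(f_{k+1}-f_k)_i=[\binom{k}{i-1}-\binom{k}{i}]/2^{k+1}$, and using $\binom{k}{i-1}-\binom{k}{i}=\binom{k}{i}(2i-k-1)/(k-i+1)$ together with the Stirling-type concentration of $\binom{k}{i}^2/4^k$ near $i\sim k/2$ at scale $\sqrt{k}$, a direct computation yields $\|f_{k+1}-f_k\|_\beta^2=O(k^{2\beta-3/2})$. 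Combined with the lower bound $\|f_k\|_\beta^2\ge c(k+1)^{2\beta-1}$ from Lemma~\ref{12}, this gives $\|(z-1)g_k\|_\beta^2=O(k^{-1/2})$. This coefficient estimate is the main technical obstacle.

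The general statement now follows by polynomial density. For any polynomial $p$ with $p(1)=0$, factor $p(z)=(z-1)q(z)$ and bound $\|pg_k\|_\beta\le\|M_q\|\,\|(z-1)g_k\|_\beta\to0$. Approximate $\varphi-\lambda$ in the $H^2_\beta$-norm by polynomials $\tilde p_n$ with $\tilde p_n(1)=0$: take its Taylor partial sums $p_n$ and set $\tilde p_n=p_n-p_n(1)$, using that $H^2_\beta$-convergence implies uniform convergence on $\overline{\D}$ to see that $p_n(1)\to(\varphi-\lambda)(1)=0$. The bound $\|M_f\|\le C\|f\|_\beta$ then yields
\[
\limsup_{k\to\infty}\|(\varphi-\lambda)g_k\|_\beta\le\|M_{(\varphi-\lambda)-\tilde p_n}\|\le C\|(\varphi-\lambda)-\tilde p_n\|_\beta,
\]
whose right-hand side tends to $0$ as $n\to\infty$.
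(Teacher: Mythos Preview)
Your proof is correct and takes a genuinely different route from the paper's argument. Both proofs agree on the easy inclusion $\sigma_e(M_\varphi)\subset\bigcap_{0<r<1}\overline{\varphi(\D-r\D)}$ and both use the normalized peak functions $g_k$ from Corollary~\ref{13} for the reverse inclusion. Where they diverge is in showing $\|(\varphi-\lambda)g_k\|_\beta\to0$. The paper expands $R^N((\lambda-\varphi)g_k)$ by the Leibniz rule and estimates each term $I_{k,j}$ separately through a fairly lengthy case analysis on $j$ and on the value of $N-\beta$, using the splitting $\D=D_1\cup D_2$ near and away from $\zeta$ for the $j=0$ term and pointwise decay of $(1-|z|^2)^{N-\beta-1/2}R^{N-1}\psi$ for the delicate $j=1$ term. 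Your approach instead exploits the special feature of the range $\beta>1/2$, namely that $\mb=H^2_\beta$ with comparable norms (Proposition~\ref{4} plus the closed graph theorem), to reduce everything to the single symbol $z-\zeta$; the identity $(z-1)f_k=2(f_{k+1}-f_k)$ and the hypergeometric-type variance computation then give $\|(z-1)g_k\|_\beta^2=O(k^{-1/2})$, and the density/approximation argument transfers this to arbitrary $\varphi-\lambda$ vanishing at $\zeta$. Your route is shorter and more structural, but it leans essentially on the algebra identity $\mb=H^2_\beta$ and on the continuity of the embedding $H^2_\beta\hookrightarrow C(\overline\D)$; the paper's direct integral estimates avoid this machinery. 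One remark: the phrase ``a direct computation yields $\|f_{k+1}-f_k\|_\beta^2=O(k^{2\beta-3/2})$'' hides a genuine calculation (the second moment $\sum_i(2i-k)^2\binom{k}{i}^2/4^k\sim c\sqrt{k}$, together with control of the tails where $k-i+1$ is small); it would strengthen the write-up to spell this out.
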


\begin{proof}
The proof for
$$\sigma_e(M_\varphi)\subset\bigcap_{0<r<1}\overline{\varphi(\D-r\D)}$$
is the same as the case $\beta\le1/2$.

On the other hand, if
$$\lambda\in\bigcap_{0<r<1}\overline{\varphi(\D-r\D)},$$
then there exists a sequence $\{z_k\}$ in $\D$ such that $|z_k|\to1$ and
$\varphi(z_k)\to\lambda$ as $k\to\infty$. Since $\beta>1/2$, it follows from 
Proposition~\ref{4} that $\varphi$ belongs to the disk algebra. Going down to a 
subsequence if necessary, we may assume that $z_k\to\zeta$ for some $\zeta\in\T$. 
Thus $\varphi(\zeta)=\lambda$ for some boundary point $\zeta$.
Let $\psi(z)=\lambda-\varphi(z)$. Then $\psi\in\mb$ and $\psi(\zeta)=0$. We will show
that $M_\psi=\lambda I-M_\varphi$ cannot be Fredholm, or $\lambda\in
\sigma_e(M_\varphi) $.

By Lemma~\ref{9} and Proposition~\ref{2}, it suffices for us to show that the integrals
$$\ind|R^N(\psi g_k)(z)|^2\,dA_{2(N-\beta)-1}(z)$$
converge to $0$ as $k\to\infty$, where $N$ is the smallest positive integer greater
than $\beta$ and $\{g_k\}$ is the sequence defined in Corollary~\ref{13}.

Let us write
$$R^N(\psi g_k)=\sum_{j=0}^N\binom{N}{j}R^j\psi R^{N-j}g_k$$
and consider the integrals
\begin{eqnarray*}
I_{k,j}&=&\ind|R^j\psi(z)R^{N-j}g_k(z)|^2\,dA_{2(N-\beta)-1}(z)\\
&=&\frac1{\|f_k\|^2}\ind|R^j\psi(z)R^{N-j}f_k(z)|^2\,dA_{2(N-\beta)-1}(z)\\
&\lesssim&\frac{(k+1)^{2(N-j)}}{\|f_k\|^2}\ind|R^j\psi(z)f_{k-(N-j)}(z)|^2\,
dA_{2(N-\beta)-1}(z),
\end{eqnarray*}
where $k\ge1$ and $0\le j\le N$.  Since $N$ is fixed and we are considering the limit
as $k\to\infty$, we may assume that $k$ is much larger than $N$. In this case, the
denominator above can be estimated by Lemma~\ref{12}, namely, we can find a positive
constant $C$ such that
\begin{equation}
I_{k,j}\le C(k+1)^{2(N-\beta)-2j+1}\ind|R^j\psi(z) f_{k-(N-j)}(z)|^2\,dA_{2(N-\beta)-1}(z)
\label{eq8}
\end{equation}
for all $k$ and $j$. Our goal is to show that $I_{k,j}\to0$ for all $0\le j\le N$ as 
$k\to\infty$.

The case $j=0$ calls for special attention, and this is the case where we critically use 
the condition that $\psi(\zeta)=0$. Recall that
$$I_{k,0}=\ind|\psi(z)R^Ng_k(z)|^2\,dA_{2(N-\beta)-1}(z).$$
Given $\varepsilon>0$ we break the unit disk into two parts, $\D=D_1\cup D_2$, where
$$D_1=\{z\in\D:|z-\zeta|<\delta\},\qquad D_2=\{z\in\D:|z-\zeta|\ge\delta\},$$
and $\delta$ is chosen so that $|\psi(z)|<\varepsilon$ for $z\in D_1$. Then
\begin{eqnarray*}
I_{k,0}&=&\int_{D_1}|\psi(z) R^Ng_k|^2\,dA_{2(N-\beta)-1}(z)\\
&&\qquad+\int_{D_2}|\psi(z)R^Ng_k(z)|^2\,dA_{2(N-\beta)-1}(z)\\
&<&\varepsilon^2\ind|R^Ng_k(z)|^2\,dA_{2(N-\beta)-1}(z)\\
&&\qquad+\|\psi\|_\infty^2\int_{D_2}|R^Ng_k(z)|^2\,dA_{2(N-\beta)-1}(z).
\end{eqnarray*}
Since $\{g_k\}$ is a sequence of unit vectors in $H^2_\beta$, it follows from
Proposition~\ref{2} that there exists a positive constant $C_1$, independent of $k$, 
such that
$$\ind|R^Ng_k(z)|^2\,dA_{2(N-\beta)-1}(z)\le C_1,\qquad k\ge1.$$
Since
$$R^Ng_k(z)\sim\frac{k^N}{\|f_k\|}f_{k-N}(z),$$
it follows from Lemma~\ref{12} that $R^Ng_k(z)\to0$ uniformly on $D_2$ as $k\to\infty$.
This shows that $I_{k,0}\to0$ as $k\to\infty$.

The case $j\ge2$ (which forces $N\ge2$) is the simplest. In fact, since $0<N-\beta\le1$, 
we have
$$2(N-\beta)-2j+1\le-1$$
for $j\ge2$. It follows from (\ref{eq8}) that
$$I_{k,j}\le\frac C{k+1}\ind|R^j\psi|^2\,dA_{2(N-\beta)-1}(z)\to0,\quad k\to\infty,$$
because $\psi\in H^2_\beta$ together with Proposition~\ref{2} implies that
\begin{align*}
\ind|R^j\psi(z)|^2&\,dA_{2(N-\beta)-1}(z)\\
&\sim\ind|R^N\psi(z)|^2(1-|z|^2)^{2(N-j)+2(N-\beta)-1}\,dA(z)\\
&\le\ind|R^N\psi(z)|^2\,dA_{2(N-\beta)-1}(z)<\infty.
\end{align*}

If $j=1$ and $0<N-\beta<1/2$, then
$$2(N-\beta)-2j+1<0.$$
It follows from (\ref{eq8}) and the argument above again that $I_{k,1}\to0$ as $k\to\infty$. 

If $j=1$ and $0<N-\beta=1/2$, then
$$I_{k,1}\le C\ind|R\psi(z)f_{k+1-N}(z)|^2\,dA_{2(N-\beta)-1}(z)\to0, \quad k\to\infty,$$
by dominated convergence.

Finally, if $j=1$ and $N-\beta>1/2$ (which forces $N\ge2$), then 
$$2(N-\beta)-2j+1>0,$$
or $2(N-\beta)-1>0$. In this case,
we recall from the remarks following Theorem 2.1 of \cite{Z} that 
$R^N\psi\in A^2_{2(N-\beta)-1}$ implies
$$\lim_{|z|\to1}|R^N\psi(z)|(1-|z|^2)^{N-\beta+\frac12}=0,$$
which is equivalent to
$$\lim_{|z|\to1}|R^{N-1}\psi(z)|(1-|z|^2)^{N-\beta-\frac12}=0.$$
Since $N\ge2$, we have $N-1\ge1$. Thus
$$\lim_{|z|\to1}|R\psi(z)|^2(1-|z|^2)^{2(N-\beta)-1}=0.$$
Recall that $\beta<N\le\beta+1$, so $0<2(N-\beta)\le2$. It follows from (\ref{eq8}) that
$$I_{k,1}\le C(k+1)\ind|R\psi(z)f_{k+1-N}(z)|^2\,dA_{2(N-\beta)-1}(z).$$
Given any $\varepsilon>0$, we choose $\delta\in(0,1)$ such that
$$|R\psi(z)|^2(1-|z|^2)^{2(N-\beta)-1}<\varepsilon,\qquad \delta<|z|<1.$$
Then by the change of variables $w=(1+\overline\zeta z)/2$ we have
\begin{eqnarray*}
I(\delta)&=:&(k+1)\int_{\delta<|z|<1}|R\psi(z) f_{k+1-N}(z)|^2\,dA_{2(N-\beta)-1}(z)\\
&\le&\varepsilon(k+1)\ind|f_{k+1-N}(z)|^2\,dA(z)\\
&=&\varepsilon(k+1)\ind\left|\frac{1+\overline\zeta z}{2}\right|^{2(k+1-N)}\,dA(z)\\
&\le&4\varepsilon(k+1)\ind|w|^{2(k+1-N)}\,dA(w)\\
&=&4\pi\varepsilon(k+1)/(k+1-N).
\end{eqnarray*}
On the other hand, it follows from uniform convergence that
$$\lim_{k\to\infty}(k+1)\int_{|z|\le\delta}|R\psi(z)f_{k+1-N}(z)|^2\,dA_{2(N-\beta)-1}(z)
=0.$$
This shows that $I_{k,1}\to0$ as $k\to\infty$. The proof of the lemma is now complete.
\end{proof}

Combining the last few lemmas, we obtain the following result about the essential
spectrum of multiplication operators on $H^2_\beta$.

\begin{thm}
Suppose $\beta$ is real and $\varphi\in\mb$. Then we always have
$$\sigma_e(M_\varphi)=\bigcap_{0<r<1}\overline{\varphi(\bn-r\bn)}.$$
If $n>1$, then
$$\sigma_e(M_\varphi)=\bigcap_{0<r<1}\overline{\varphi(\bn-r\bn)}
=\overline{\varphi(\bn)}=\sigma(M_\varphi).$$
\label{15}
\end{thm}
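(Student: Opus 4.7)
The statement is essentially a synthesis of the three preceding lemmas, which together exhaust all possible cases, so my plan is to assemble them rather than prove anything new.

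First, I would split the argument into two cases according to whether $n>1$ or $n=1$. In the high-dimensional case, I would simply quote Lemma~\ref{10}, which already delivers the full chain of equalities
$$\sigma_e(M_\varphi)=\bigcap_{0<r<1}\overline{\varphi(\bn-r\bn)}=\overline{\varphi(\bn)}=\sigma(M_\varphi),$$
so there is nothing left to do in this case.

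For $n=1$, the formula for $\sigma_e(M_\varphi)$ follows from Lemma~\ref{11} when $\beta\le 1/2$ and from Lemma~\ref{14} when $\beta>1/2$. Since these two ranges cover all real $\beta$, combining them gives the identity
$$\sigma_e(M_\varphi)=\bigcap_{0<r<1}\overline{\varphi(\D-r\D)}$$
for every real $\beta$ in dimension one. Together with the case $n>1$, this establishes the first assertion for all $\beta$ and all $n$. The second assertion is just Lemma~\ref{10} restated.

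The only step requiring any thought is making sure that the three lemmas, which are stated under mutually exclusive hypotheses, genuinely partition the parameter space $(n,\beta)\in\{1,2,3,\ldots\}\times\R$. This is immediate: the pairs $(n>1,\text{any }\beta)$, $(n=1,\beta\le 1/2)$, and $(n=1,\beta>1/2)$ do form a partition. So the ``main obstacle'' is really just bookkeeping; there is no new analytic content in Theorem~\ref{15} beyond what the preceding lemmas already supply.
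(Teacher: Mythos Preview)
Your proposal is correct and matches the paper's approach exactly: the paper presents Theorem~\ref{15} without a separate proof, simply stating that it follows by combining the preceding lemmas (Lemma~\ref{10} for $n>1$, Lemma~\ref{11} for $n=1$ with $\beta\le1/2$, and Lemma~\ref{14} for $n=1$ with $\beta>1/2$). There is indeed no new analytic content beyond this bookkeeping.
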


As a consequence of the theorem above and its proof, we obtain the following index
formulas for $M_\varphi$.

\begin{thm}
Suppose $\beta$ is real and $\varphi\in\mb$. Then $M_\varphi: H^2_\beta\to 
H^2_\beta$ is Fredholm  if and only if there exist $r\in(0,1)$ and $\delta>0$ such that
$|\varphi(z)|\ge\delta$ for all $r\le|z|<1$. When $M_\varphi$ is Fredholm,
$\Ind(M_\varphi)=0$ for $n>1$, and for $n=1$, $\Ind(M_\varphi)$ is equal to the 
winding number of the mapping $e^{it}\mapsto\varphi(re^{it})$ from the unit circle 
into $\C-\{0\}$.
\label{16}
\end{thm}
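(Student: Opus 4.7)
The plan is to obtain the Fredholm criterion directly from Theorem~\ref{15} and then compute the index by factoring $\varphi$ into an invertible multiplier times a polynomial of known index. By definition, $M_\varphi$ is Fredholm if and only if $0\notin\sigma_e(M_\varphi)$, and Theorem~\ref{15} gives $\sigma_e(M_\varphi)=\bigcap_{r\in(0,1)}\overline{\varphi(\bn-r\bn)}$. The origin avoids this intersection precisely when there exist $r\in(0,1)$ and $\delta>0$ with $|\varphi(z)|\ge\delta$ on $\{r\le|z|<1\}$, which establishes the first assertion. For $n>1$, Theorem~\ref{15} further gives $\sigma_e(M_\varphi)=\sigma(M_\varphi)$, so a Fredholm $M_\varphi$ must in fact be invertible, hence $\Ind(M_\varphi)=0$.

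For $n=1$, suppose $M_\varphi$ is Fredholm and pick $r\in(0,1)$ close to $1$ with $|\varphi|\ge\delta$ on $\{r\le|z|<1\}$; after a small perturbation of $r$ (possible since the zero set of $\varphi$ meets only countably many circles $|z|=\rho$) I may assume $\varphi$ has no zeros on $|z|=r$. List the zeros $a_1,\dots,a_N$ of $\varphi$ in $r\D$ with multiplicities, set $p(z)=\prod_{k=1}^N(z-a_k)$, and let $\psi=\varphi/p$. Then $\psi$ is holomorphic on $\D$, bounded, and bounded below away from zero, by compactness on $\{|z|\le r\}$ together with $|\varphi|\ge\delta$ and the boundedness of $|p|$ both above and below on $\{r<|z|<1\}$. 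Repeating the argument indicated in the proof of Lemma~\ref{11}, which combines Proposition~\ref{7} with the fact that $A^2_{2(N-\beta)-1}$-membership for a non-vanishing bounded holomorphic function is determined by its behavior near $\partial\D$, yields $\psi\in\mb$ and $1/\psi\in\mb$, so $M_\psi$ is invertible on $H^2_\beta$ with $\Ind(M_\psi)=0$. Each $M_{z-a_k}$ is injective with cokernel equal to the one-dimensional span of the reproducing kernel $K_{a_k}$, so $\Ind(M_{z-a_k})=-1$. Multiplicativity of the Fredholm index then gives
$$\Ind(M_\varphi)=\Ind(M_\psi)+\sum_{k=1}^N\Ind(M_{z-a_k})=-N,$$
and the argument principle identifies $N$ with the winding number of $e^{it}\mapsto\varphi(re^{it})$ about $0$, producing the claimed formula.

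The main difficulty is justifying that $\psi=\varphi/p$ is itself a multiplier of $H^2_\beta$. This does not follow from $\varphi\in\mb$ together with $\psi\in H^\infty$ alone, since for $\beta>0$ the algebra $\mb$ may be strictly smaller than $H^\infty$ and its members must respect the integrability of a fractional radial derivative. The idea, following Proposition~\ref{7} and its use in Lemma~\ref{11}, is to recast $H^2_\beta$-membership via Proposition~\ref{2} as $R^N\,\cdot\,\in A^2_{2(N-\beta)-1}$ with $2(N-\beta)-1>-1$, where the multiplier behavior of a non-vanishing bounded holomorphic function is comparatively soft and is controlled entirely by behavior near the boundary; applying the product rule for $R^N$ to $\psi f=(\varphi f)/p$ and exploiting the cancellation of the zeros of $\varphi f$ against those of $p$ at the $a_k$, together with the boundedness of $1/p$ and its derivatives near $\partial\D$, transfers the integrability condition from $\varphi f$ to $\psi f$. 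A minor auxiliary point, already noted, is selecting $r$ to avoid the discrete set of boundary circles on which $\varphi$ has zeros, which is possible since this exceptional set is at most countable.
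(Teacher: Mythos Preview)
Your approach mirrors the paper's exactly: the Fredholm criterion is read off Theorem~\ref{15}, the $n>1$ index follows because $\sigma_e=\sigma$ forces invertibility, and for $n=1$ you factor $\varphi=\psi p$ with $\psi$ an invertible multiplier (via the argument of Lemma~\ref{11}) and a polynomial $p$ carrying the zeros, then use additivity of the index.

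Two small remarks. First, on signs: your computation correctly gives $\Ind(M_\varphi)=-N$, i.e., \emph{minus} the winding number, which agrees with Theorem~C in the introduction; the statement of Theorem~\ref{16} as written drops this minus sign, and the paper's own proof makes the same slip when it asserts that $\Ind(M_p)$ equals the number of zeros rather than its negative. So your line ``producing the claimed formula'' is not literally accurate against the statement you were given, though your calculation is the right one. Second, the perturbation of $r$ to avoid zeros of $\varphi$ on $|z|=r$ is unnecessary: the hypothesis $|\varphi|\ge\delta$ on $\{r\le|z|<1\}$ already rules out zeros there.
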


\begin{proof}
The desired characterization of Fredholm multiplication operators $M_\varphi$ is a
direct consequence of Theorem~\ref{15}. If $n>1$ and $M_\varphi$ is Fredholm, it follows
from the proof of Lemma~\ref{10} that $M_\varphi$ is actually invertible, so 
$\Ind(M_\varphi)=0$.

If $n=1$ and $M_\varphi$ is Fredholm, it follows from the proof of Lemma~\ref{11} that
$\varphi=\psi p$, where both $\psi$ and $1/\psi$ are multipliers of $H^2_\beta$ and
$p$ is a polynomial. Thus $M_\psi$ is invertible on $H^2_\beta$ and $\Ind(M_\varphi)
=\Ind(M_p)$, which is equal to the number of zeros of $p$ inside $\D$, with multiple 
zeros counted according to multiplicity. This shows that $\Ind(M_\varphi)$ is equal to 
the winding number of $\varphi$ restricted to the circle $|z|=r$.
\end{proof}

If $n>1$ and $\varphi\in\mb$ is continuous up to the boundary, then it is clear that
$$\varphi(\overline{\bn})=\overline{\varphi(\bn)}=
\bigcap_{0<r<1}\overline{\varphi(\bn-r\bn)}=\varphi(\partial\bn).$$
This is certainly a purely high dimensional phenomenon.

\end{document}